  \numberwithin{equation}{section} 
\def\R{\mathbb{ R}}  
\newtheorem{thm}{Theorem}[section]
\newtheorem{lem}[thm]{Lemma}
\renewcommand{\O}{\mathcal{O}}
\newcommand{\be}{\begin{equation}}
\newcommand{\ee}{\end{equation}}
\newcommand{\ba}{\begin{array}}
\newcommand{\ea}{\end{array}}
\newcommand{\bg}{\begin{gathered}}
\newcommand{\eg}{\end{gathered}}
\renewcommand{\a}{\alpha}
\renewcommand{\t}{\theta}
\renewcommand{\O}{\mathcal{O}}
\renewcommand{\l}{\lambda}
\newcommand{\f}{\phi}
\renewcommand{\t}{\theta}
\newcommand{\A}{\mathcal{A}_q}
\newcommand{\aw}{Askey-Wilson  }
\newcommand{\bea}{\begin{eqnarray}}
\newcommand{\eea}{\end{eqnarray}}
\newcommand{\ac}{Al-Salam--Chihara }
\newcommand{\D}{\mathcal{D}_q}
\newcommand{\Sum}{\sum_{n=0}^\infty}
\begin{document}

\title{   Orthogonal Polynomials of \aw Type}
\author{Mourad E.H. Ismail \and Ruiming Zhang \and Keru Zhou}

\maketitle

\begin{abstract}
We study two families of orthogonal polynomials. The first is a finite 
family related to the Askey--Wilson polynomials but the orthogonality is on $\R$. A limiting case of this family is an infinite system of orthogonal polynomials whose moment problem is indeterminate. We provide several orthogonality measures for the infinite family and derive their Plancherel-Rotach asymptotics. 
\end{abstract}

Filename: IsmZhangZhouNEWOPV5.tex

2020 Mathematics Subject Classification: Primary: 30E05, 33D45;     Secondary: 33D15.

Keywords and phrases: Askey--Wilson polynomials, indeterminate moment problems,  finite families, infinite families, Plancherel-Rotach asymptotics,  


\section{Introduction}

This paper outgrew from the first named author's earlier paper 
\cite{Ism2020} where some solutions to the 
\ac moment problem was found.  We started with the weight 
function whose total mass was evaluated by Askey in \cite{Ask2} and 
we were led to \aw polynomials with purely imaginary parameters, 
which are not necessarily pairs of complex conjugates. 
 After a change of variable our  polynomials  form finite families of 
 polynomials orthogonal on $\R$. It turned out that Askey has already 
partially  discovered this fact in \cite{Ask2}. 
The details are in Section 2. 
Section 2 also  contains raising and 
 lowering   operators for the finite family of  polynomials as well as the 
 second order operator equation satisfied by them.  The second order 
 operator equation  is of Sturm--Liouville type and are selfadjoint (symmetrical).  Although the 
 orthogonality holds for finitely many polynomials the polynomials 
 are defined for all degrees. We determined the large degree 
 asymptotics which shows that  the zeros of the polynomials form a dense set in the segment connecting $\pm i$.  
 
When we further let one of the four 
parameters tend  to zero,  we have an infinite family of polynomials 
orthogonal on the imaginary axis with respect to infinitely many 
probability measures.  We identify one absolutely  continuous  
measure and an infinite family of discrete measures of orthogonality.  
This is done in Section 3. 
In Section 4 we   derive Plancherel--Rotach type asymptotics   around the largest zero (soft edge) and 
beyond the largest zero (tail).  We also develop  the large degree  
asymptotics of the polynomials in the oscillatory range (bulk 
scaling).  In addition, we develop a new type of asymptotics, where 
we let the parameters also tend to $\infty$ with 
$x$ around the largest zero. In this limit the leading terms of 
the asymptotics of the zeros, arranged from large to small, contains the 
zeros of the Ramanujan function.  The Plancherel-Rotach asymptotics of the $q^{-1}$-Hermite polynomials, the Stieltjes--Wigert polynomials and the $q$-Laguerre polynomials are in 
\cite{Ism2005}, and \cite{Ism:Zha}. The weight function given in Section 2 is not positive when the parameters are real. In Section 5 we treat the case of the finite family when the parameters are not real but are complex conjugates. This leads to positive weight functions.  

This work extends the results of Ismail \cite{Ism2020}, where he studied the moment problem of the Al-Salam--Chihara polynomials for $q > 1$.  The \ac polynomials first appeared in \cite{Als:Chi}. 
The $q > 1$ cases were first studied in \cite{Ask:Ism}.
We follow the treatments of the  moment problem in \cite{Akh}, \cite{Sho:Tam}, and the spectral theory as in \cite{Tes}. This work is a contribution to the study of specific  moment problems. Many 
other moment problems have been studied over the years. Some  references are \cite{Ism:Rah}, \cite{Ber:Chr}, \cite{Chr:Koe}, \cite{Chr2}, \cite{Chi2}, 
\cite{Chr:Ism}. The most complete study is the $q^{-1}$-Hermite polynomials where theta functions made it possible to explicitely find, among other things,  the $N$-extremal measures. References for orthogonal polynomials are 
\cite{Chi}, \cite{Koe:Swa}.  The operator equations derived in \S 2 
extend the work of Ismail \cite{Ism93} on the $q^{-1}$-Hermite polynomials. 

The Ramanujan, aka $q$-Airy function 
\bea
\label{eqAq}
A_q(z) = \Sum \frac{q^{n^2}}{(q;q)_n} \, (-z)^n. 
\eea
was introduced in \cite{Ism2005}, see also \cite{Ismbook}. In many of our computations we shall use \cite{Gas:Rah},
\bea
(aq^{-n};q)_n = (q/a;q)_n(-a)^n q^{-\binom{n+1}{2}}.
\eea 

\section{A Finite Family of Orthogonal Polynomials} 
We shall use the notation  
\begin{equation}
\label{eqxofz}
x=\frac{z-1/z}{2}.
\end{equation}
When we set $2x = z-1/z$, then $z = x\pm \sqrt{x^2+1}$. We shall use the notation 
\bea
\label{eqdfz}
z, 1/z  = x\pm \sqrt{x^2+1}, \quad \textup{with}\quad |z|\le |1/z|. 
\eea
Set  
\bea
u_n(x;a) = (-aq^{-n}z, aq^{-n}/z;q)_n.
\eea
Recall the Askey $q$-beta integral \cite{Ask2}, \cite[Ex 6.10]{Gas:Rah}
\begin{align}
\label{eqAsketqbeta}
&
I(t_1, t_2, t_3, t_4):= 
\int_\R \frac{2z\prod_{j=1}^4 (-t_jz,  t_j/z;q)_\infty}
{(-z^2, -q/z^2;q)_\infty}dx  
\\ 
&= -\log q \; (q;q)_\infty\frac{\prod_{1 \le j < k \le 4}(-t_jt_k/q;q)_\infty}
{(t_1t_2t_3t_4/q^3;q)_\infty}, \nonumber
\end{align}
which holds for $|t_1t_2t_3t_4| < q^3$. 
Let 
\bea
\label{eqdefW}
W(x, {\bf t}) = \frac{2z\prod_{j=1}^4 (-t_jz,  t_j/z;q)_\infty}
{(-z^2, -q/z^2;q)_\infty},
\eea
where ${\bf t} = (t_1, t_2, t_3, t_4)$ and $x\in \mathbb{R}$ is defined by \eqref{eqxofz}.  The polynomials defined below in 
\eqref{eqDefASP} were introduced by Askey in his seminal work 
\cite{Ask2} who proved the orthogonality relation in Theorem 
\ref{thm:finite}. We will include Askey's proof because it will used 
to prove other orthogonality relations for the same polynomials. 
The proof is analogous to the attachment technique used 
by Askey and Wilson in \cite{Ask:Wil} and others.  The \aw proof is also explained in \cite{Ismbook}.

\begin{thm}
\label{thm:finite}Given any $N\in\mathbb{N}$, let
\begin{equation}
t_{1},\,t_{2},\,t_{3},\,t_{4}\in\mathbb{R},\quad\left|t_{1}t_{2}t_{3}t_{4}q^{3}\right|<q^{2N}.\label{eq:finite-condition}
\end{equation}
Then the polynomials, 
\begin{align}
&
p_{n}(x,\,\mathbf{t})=\left(t_{1}/q\right)^{n}\left(-q^{2}/t_{1}t_{2},
-q^{2}/t_{1}t_{3},-q^{2}/t_{1}t_{4};q\right)_{n}\label{eqDefASP}\\
&
 \times{}_{4}\phi_{3}\begin{pmatrix}\begin{array}{c}
q^{-n},\,q^{n+3}/t_{1}t_{2}t_{3}t_{4},-q/t_{1}z,\,qz/t_{1}\\
-q^{2}/t_{1}t_{2},-q^{2}/t_{1}t_{3},-q^{2}/t_{1}t_{4}
\end{array} & \bigg|q,\,q\end{pmatrix} 
\nonumber
\end{align}
are orthogonal with respect to the normalized weight function, 
\begin{align}
& w(x,\,\mathbf{t})=\frac{W(x,\,\mathbf{t})(t_{1}t_{2}t_{3}t_{4}/q^{3};q)_{\infty}}{(q;q)_{\infty}\log q^{-1}\prod_{1\le j<k\le4}\left(-t_{j}t_{k}/q;q\right)_{\infty}}\label{eq:finite-weight}\\
	& =\frac{2z(t_{1}t_{2}t_{3}t_{4}/q^{3};q)_{\infty}\prod_{j=1}^{4}(-t_{j}z,t_{j}/z;q)_{\infty}}{(q,-z^{2},-q/z^{2};q)_{\infty}\log q^{-1}\prod_{1\le j<k\le4}\left(-t_{j}t_{k}/q;q\right)_{\infty}}\nonumber 
	\end{align}
for $0\le n\le N$ where ${\bf t}=(t_{1},t_{2},t_{3},t_{4})$ and $x\in \mathbb{R}$ is defined by \eqref{eqxofz}. Furthermore,
the orthogonality relation is 
\begin{align}
&
 \int_{\mathbb{R}}w(x,\mathbf{t})p_{n}(x,\,\mathbf{t})\overline{p_{m}(x,\,\mathbf{t})}dx \label{eq:finite-orthogonality} \\
 &
 =\frac{(-1)^n(1-q^{n+3}/t_{1}t_{2}t_{3}t_{4})    \prod_{1 \le j < k \le 4}\left(-q^{2}/t_{j}t_{k};q\right)_{n}  (q;q)_n}{(1-q^{2n+3}/t_{1}t_{2}t_{3}t_{4})(q^{4}/t_{1}t_{2}t_{3}t_{4};q)_{n}}
\delta_{m,n}.
\nonumber
\end{align}
\end{thm}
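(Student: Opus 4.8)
The argument is Askey's ``attachment'' technique, and the plan is to reduce the statement to one application of the $q$-Pfaff--Saalsch\"utz sum. Expanding the ${}_{4}\phi_{3}$ in \eqref{eqDefASP} and writing $g_{k}(z):=(-q/t_{1}z,\,qz/t_{1};q)_{k}$ --- which, by $z-1/z=2x$, equals $\prod_{j=0}^{k-1}(1-q^{2j+2}/t_{1}^{2}-2xq^{j+1}/t_{1})$, a polynomial of degree $k$ in $x$ with leading coefficient $(-2/t_{1})^{k}q^{k(k+1)/2}$ --- one gets
\[
p_{n}(x,\mathbf{t})=C_{n}\sum_{k=0}^{n}A_{k}\,g_{k}(z),\qquad A_{k}=\frac{(q^{-n},\,q^{n+3}/t_{1}t_{2}t_{3}t_{4};q)_{k}\,q^{k}}{(-q^{2}/t_{1}t_{2},\,-q^{2}/t_{1}t_{3},\,-q^{2}/t_{1}t_{4},\,q;q)_{k}},
\]
with $C_{n}=(t_{1}/q)^{n}(-q^{2}/t_{1}t_{2},-q^{2}/t_{1}t_{3},-q^{2}/t_{1}t_{4};q)_{n}$; thus $p_{n}$ has real coefficients (so $\overline{p_{m}}=p_{m}$ on $\R$) and leading coefficient $2^{n}(q^{n+3}/t_{1}t_{2}t_{3}t_{4};q)_{n}$, nonzero for non-exceptional $\mathbf{t}$, so $\deg p_{n}=n$. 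Since $r_{m}(x):=(-q/t_{3}z,\,qz/t_{3};q)_{m}$ has degree $m$, the set $\{r_{0},\dots,r_{n-1}\}$ spans the polynomials of degree $<n$, so it suffices to prove
\[
\int_{\R}W(x,\mathbf{t})\,p_{n}(x,\mathbf{t})\,r_{m}(x)\,dx=0\qquad(0\le m\le n-1)
\]
and to compute this integral at $m=n$; indeed $w=W/I(\mathbf{t})$, where $I$ is Askey's integral \eqref{eqAsketqbeta} (this is \eqref{eq:finite-weight}), $\int_{\R}W\,p_{n}\,p_{m}\,dx=0$ for $m<n$, and $\int_{\R}W\,p_{n}\,p_{n}\,dx$ is a known multiple (the ratio of leading coefficients of $p_{n}$ and $r_{n}$) of $\int_{\R}W\,p_{n}\,r_{n}\,dx$, so \eqref{eq:finite-orthogonality} follows.

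The attachment uses only $(a;q)_{\infty}=(a;q)_{j}(aq^{j};q)_{\infty}$ and the reflection $(a;q)_{k}=(-a)^{k}q^{\binom{k}{2}}(q^{1-k}/a;q)_{k}$, which yield $g_{k}(z)(-t_{1}z,t_{1}/z;q)_{\infty}=(-1)^{k}t_{1}^{-2k}q^{k^{2}+k}(-t_{1}q^{-k}z,t_{1}q^{-k}/z;q)_{\infty}$ and, with $t_{1}$ replaced by $t_{3}$, the corresponding identity for $r_{m}$. As these modify different factors of $W$, they compose:
\[
W(x,\mathbf{t})\,g_{k}(z)\,r_{m}(x)=(-1)^{k+m}t_{1}^{-2k}t_{3}^{-2m}q^{k^{2}+k+m^{2}+m}\,W(x,\mathbf{t}^{(k,m)}),\qquad \mathbf{t}^{(k,m)}:=(t_{1}q^{-k},t_{2},t_{3}q^{-m},t_{4}).
\]
Integrating the $k$-expansion of $p_{n}$ termwise and evaluating each shifted weight by \eqref{eqAsketqbeta}, the integral $\int_{\R}W\,p_{n}\,r_{m}\,dx$ becomes $(-1)^{m}t_{3}^{-2m}q^{m^{2}+m}C_{n}$ times $\sum_{k}A_{k}(-1)^{k}t_{1}^{-2k}q^{k^{2}+k}\,I(\mathbf{t}^{(k,m)})$. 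In the closed form of $I(\mathbf{t}^{(k,m)})$ only the infinite products of the index pairs $\{1,2\},\{1,3\},\{1,4\}$ and the factor $(s_{1}s_{2}s_{3}s_{4}/q^{3};q)_{\infty}$ depend on $k$; peeling off their $k$-free parts by $(aq^{-k};q)_{\infty}=(aq^{-k};q)_{k}(a;q)_{\infty}$ and simplifying each $(aq^{-k};q)_{k}$ through the identity $(aq^{-n};q)_{n}=(q/a;q)_{n}(-a)^{n}q^{-\binom{n+1}{2}}$ recalled in the introduction, every stray power of $q$ and of the $t_{j}$ cancels against $(-1)^{k}t_{1}^{-2k}q^{k^{2}+k}$, while the factors $(-q^{2}/t_{1}t_{j};q)_{k}$ that emerge cancel the ones in $A_{k}$. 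What remains is a terminating, balanced ${}_{3}\phi_{2}$:
\[
\int_{\R}W(x,\mathbf{t})\,p_{n}(x,\mathbf{t})\,r_{m}(x)\,dx=\Lambda_{n,m}\;{}_{3}\phi_{2}\!\left(\begin{array}{c}q^{-n},\;q^{n+3}/t_{1}t_{2}t_{3}t_{4},\;-q^{m+2}/t_{1}t_{3}\\ -q^{2}/t_{1}t_{3},\;q^{m+4}/t_{1}t_{2}t_{3}t_{4}\end{array}\bigg|\,q,q\right),
\]
where $\Lambda_{n,m}$ is an explicit constant not containing the summation index.

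By the $q$-Pfaff--Saalsch\"utz theorem (\cite{Gas:Rah}) this ${}_{3}\phi_{2}$ equals
\[
\frac{(-q^{-n-1}t_{2}t_{4};q)_{n}\,(q^{-m};q)_{n}}{(-q^{2}/t_{1}t_{3};q)_{n}\,(q^{-n-m-3}t_{1}t_{2}t_{3}t_{4};q)_{n}}.
\]
For $0\le m\le n-1$ the Pochhammer $(q^{-m};q)_{n}$ contains the vanishing factor $1-q^{-m+m}$, so the integral is $0$; this is exactly orthogonality of $p_{n}$ to every polynomial of degree $<n$. For $m=n$ one has $(q^{-n};q)_{n}=(-1)^{n}q^{-\binom{n+1}{2}}(q;q)_{n}\ne0$; substituting this, along with $\Lambda_{n,n}$, $C_{n}$, and the two leading coefficients, into the reduction of the first paragraph gives, after a routine but long simplification of $q$-shifted factorials, precisely the right side of \eqref{eq:finite-orthogonality}.

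The real work is the middle paragraph: tracking the parameter shifts through Askey's integral and the two reflection identities so that everything telescopes onto the balanced ${}_{3}\phi_{2}$ is the bookkeeping-heavy core of the attachment method, and it is where sign and exponent slips are easiest. The only other subtlety is that $W(x,\mathbf{t})$ is not of one sign, so the integrals here are merely conditionally convergent; one justifies the termwise integration and the use of \eqref{eqAsketqbeta} by first proving the identity for $\mathbf{t}$ in a small polydisc --- where all the products converge absolutely --- and then continuing it analytically to the region \eqref{eq:finite-condition}.
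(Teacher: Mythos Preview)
Your proof is correct and follows essentially the same route as the paper: Askey's attachment method, expanding $p_n$ in the basis $g_k(z)=(-q/t_1z,qz/t_1;q)_k$, testing against the analogous basis in a second parameter, reducing via \eqref{eqAsketqbeta} to a terminating balanced ${}_3\phi_2$, and summing by $q$-Pfaff--Saalsch\"utz. The paper phrases the same computation in the equivalent basis $u_k(x;t_1)=(-t_1q^{-k}z,t_1q^{-k}/z;q)_k$ (your $g_k$ equals $(-1)^kt_1^{-2k}q^{k^2+k}u_k(x;t_1)$) and pairs against $u_m(x;t_2)$ rather than your $r_m$ built from $t_3$, which makes the attachment step literally $I_{m,k}=I(t_1q^{-k},t_2q^{-m},t_3,t_4)$ without the reflection identity; these are cosmetic differences and the resulting ${}_3\phi_2$'s are the same up to relabeling.
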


Before proving Theorem \ref{thm:finite} we next indicate the range of the parameters to ensure orthogonality. 
To determine the large $z$ behavior of $W$ we set $z = q^{-m}\l$, where $1<  | \l |  \le 1/q$. For this $x$ we have 
\begin{align*}
&
W(x; {\bf t}) = \O\left( q^{-m} \frac{\prod_{j=1}^4 (-t_j \l q^{-m};q)_m}
{(-\l^2q^{-2m};q)_{2m}} \right) = \O \left((t_1t_2t_3t_4q^{-2})^m\right). 
\end{align*}
For integrability we  need $W(x; {\bf t})$ to be $\O(x^{-1-\epsilon})$ for some positive 
$\epsilon$. This happens  if and  only if $|t_1t_2t_3t_4q^{-3}| < 1$. The moments $\int_\R x^n W(x, {\bf t}) dx$ exist for $0\le n \le 2N$, if 
$|t_1t_2t_3t_4q^{-3}| < q^{2N}$.

\begin{proof}[Proof of Theorem \textup{\ref{thm:finite}}] 
 Let 
\bea
p_n(x, {\bf t}) = \sum_{k=0}^n \frac{(q^{-n};q)_k}{(q;q)_k} a_{n,k}
\; u_k(x;t_1). 
\notag
\eea
where $a_{n.k}$ are to be determined. 
We now consider the integral
\begin{align*}
	&
I_{m,k} := \int_\R W(x, {\bf t})  u_k(x;t_1)u_m(x; t_2) \; dx,\quad 0\le m,k\le N.
\end{align*}
It is clear that $I_{m,k} = I(t_1q^{-k}, t_2q^{-m}, t_3, t_4)$, hence for $0\le n,m\le N$,
\begin{align*}
&
\int_\R W(x, {\bf t})  p_n(x, {\bf t}) u_m(x; t_2) dx \\
&= I(t_1, t_2q^{-m}, t_3, t_4)  \sum_{k=0}^n \frac{(q^{-n};q)_k}{(q;q)_k} 
 a_{n,k} \frac{(-t_1t_2q^{-m-k-1}, -t_1t_3q^{-k-1}, -t_1t_4q^{-k-1};q)_k}
{(t_1t_2t_3t_4q^{-m-k-3};q)_k}\\
&= I(t_1, t_2q^{-m}, t_3, t_4)  \sum_{k=0}^n \frac{(q^{-n};q)_k}{(q;q)_k} 
\frac{(-q^{m+2}/t_1t_2, -  q^2/t_1t_3, -q^2/t_1t_4;q)_k}
{(q^{m+4}/t_1t_2t_3t_4;q)_k} t_1^{2k}q^{-k(k+1)}  (-1)^k a_{n,k}. 
\end{align*}
We now choose 
\bea
\notag
a_{n,k} =C_n \frac{(q^{n+3}/t_1t_2t_3t_4;q)_k}
{(-q^2/t_1t_2, -q^2/t_1t_3, -q^2/t_1t_4;q)_k} q^{k(k+2)}(-1)^k t_1^{-2k}, 
\eea
and conclude that 
\begin{align*}
&\int_\R W(x, {\bf t})  p_n(x, {\bf t}) u_m(x; t_2) dx \\ 
&= C_nI(t_1, t_2q^{-m}, t_3, t_4) \; 
 {}_3\phi_2\left(\left. \ba{c} q^{-n}, q^{n+3}/t_1t_2t_3t_4, -q^{m+2}/t_1t_2 \\
q^{m+4}/t_1t_2t_3t_4, -q^2/t_1t_2  \ea \right|q , q \right)\\
&=  C_n I(t_1, t_2q^{-m}, t_3, t_4)\;  \frac{(q^{m+1-n}, q^{2}/t_3t_4;q)_n}
{(q^{m+4}/t_1t_2t_3t_4, q^{-n-1}t_1t_2;q)_n}, 
\end{align*}
which clearly vanishes for $m < n$. We choose $C_n$ to be the factor 
in front of the ${}_4\f_3$ in \eqref{eqDefASP}. It is clear that the integral 
in \eqref{eq:finite-orthogonality} equals 
\begin{align*}
&t_1^n\frac{(q^{-n}, q^{n+3}/t_1t_2t_3t_4;q)_n}{(q;q)_n}  \int_\R 
p_n(x; {\bf t}) W(x; {\bf t}) (-q/t_1z, qz/t_1;q)_n dx \\
&= t_2^n\frac{(q^{-n}, q^{n+3}/t_1t_2t_3t_4;q)_n}{(q;q)_n}  \int_\R 
p_n(x; {\bf t}) W(x; {\bf t}) (-q/t_2z, qz/t_2;q)_n dx\\
&= \frac{(-1)^n(1-q^{n+3}/t_{1}t_{2}t_{3}t_{4})    \prod_{1 \le j < k \le 4}\left(-q^{2}/t_{j}t_{k};q\right)_{n}  (q;q)_n}{(1-q^{2n+3}/t_{1}t_{2}t_{3}t_{4})(q^{4}/t_{1}t_{2}t_{3}t_{4};q)_{n}}I(t_1,t_2,t_3,t_4).
\end{align*}
This completes the proof. 
\end{proof}

It must be noted that the weight function $W$ is not positive on $\R$ if the parameters $t_1, t_2, t_3, t_4$ are real and distinct but $W$ is positive when the parameters form a pair of complex conjugates.  

The orthogonality measure of the $q^{-1}$-Hermite polynomials is not unique.   Askey \cite{Ask1} identified the weight function, \cite{Ismbook}
\begin{equation}
	\label{akf}
	w_A(x):=  \frac{-2z/\log q}{(q, -z^2,-q/z^2;q)_\infty}, \quad   
	x=(z-1/z)/2,
\end{equation}
for the $q^{-1}$-Hermite polynomials,  here $\int_\R w_A(x) dx =1$. 
Ismail and Masson \cite{Ism:Mas} 
proved that the $q^{-1}$-Hermite polynomials are orthogonal with respect to  a family of discrete measures supported at the sequences of points $\{x_n(\alpha):-\infty<n<+\infty\}$ with the masses $m_n(\alpha)$ at $x_n(\alpha)$, where 
\begin{equation}\label{mass}
x_n(\alpha)=(q^{-n}/\alpha-\alpha q^n)/2, \quad  m_n(\alpha)=\frac{\alpha^{4n}q^{n(2n-1)}(1+\alpha^2q^{2n})}{(-\alpha^2,-q/\alpha^2,q;q)_\infty},
\end{equation}
with $\alpha\in(q,1)$, see more details in \cite{Ismbook} and \cite{Ism:Mas}.  These measure are the only measures which make the polynomials dense in their weighted $L_2$ spaces. They are normalized to have total mass $1$.

We denote the measure in \eqref{mass}  by 
$\mu_\a$ and  define a measure $\mu$ by 
\bea
\label{eqortmua}
\mu(x) = \frac{(t_{1}t_{2}t_{3}t_{4}/q^{3};q)_{\infty} \prod_{j=1}^4 (-t_jz,  t_j/z;q)_\infty}
{\prod_{1\le j<k\le4}\left(-t_{j}t_{k}/q;q\right)_{\infty}}\; \mu_\a(x),
\eea
with $x = (z-1/z)/2$.  Then under the conditions in Theorem \ref{thm:finite} we have the orthogonality relation
\begin{align}
&
\int_{\mathbb{R}} p_{n}(x,\,\mathbf{t})\overline{p_{m}(x,\,\mathbf{t})}
d\mu(x) \label{eqorR}  \\
& =\frac{(-1)^n(1-q^{n+3}/t_{1}t_{2}t_{3}t_{4})   
  \prod_{1 \le j < k \le 4}\left(-q^{2}/t_{j}t_{k};q\right)_{n} 
 (q;q)_n}{(1-q^{2n+3}/t_{1}t_{2}t_{3}t_{4})
  (q^{4}/t_{1}t_{2}t_{3}t_{4};q)_{n}}
\delta_{m,n}.
\nonumber
\end{align}
The proof is the same as the proof of Theorem \ref{thm:finite} once 
we evaluate the total mass of $\mu$. The total mass of $\mu$ is 
evaluated using the ${}_6\psi_6$ summation theorem as 
indicated in \cite{Ism:Mas}.  We note that the only difference 
between  \eqref{eqorR} and \eqref{eq:finite-orthogonality} is that 
the normalized Askey measure $w_A$ is replaced  by $\mu_\a$.

 It may of interest to explain why $\mu_\a$ is a discrete version of 
 the Askey weight function $w_A$. The parameterization used is 
 $x_n(\a) = (q^{-n}/\a- \a q^n)/2$, so that 
 \begin{equation*}
 dx = \frac{q^{-n}/\a+  \a q^n}{2} (-\log q) dn,
 \end{equation*}
  and we interpret $dn$ as the mesh used which is 1 in tis case. 
At the same time, we find the Askey weight function in \eqref{akf} has the property as follow:		
\begin{align*}
&
-\log q\;  w_{A}\left(x_{n}(\alpha)\right) =\frac{ 2q^{-n} / \alpha}{\left(q, -q^{-2 n} / \alpha^{2},-q^{2 n+1} \alpha^{2} ; q\right)_{\infty}} \\
&=  \frac{q^{-n}\left(-q \alpha^{2} ; q\right)_{2 n}}
{\alpha\left(-q^{-2 n} / \alpha^{2} ; q\right)_{2 n}} \frac{2} 
{\left( q, -1 / \alpha^{2},-q \alpha^{2} ; q\right)_{\infty}}
 =\frac{2 \alpha^{4 n+1} q^{2 n^{2}}}{\left(-q / \alpha^{2},
 -\alpha^{2} ; q\right)_{\infty}}.
\end{align*}
Hence 
\begin{eqnarray}\label{askfp2}
m_n(\alpha)=w_A(x_n(\alpha))dx_n(\alpha). 
\end{eqnarray}	
It implies that any $N$-extremal measure is essentially equal to the value of the Askey weight functions multiplied by $dx$ calculated at the mass points.

Recall that the \aw polynomials \cite{Ask:Wil}, \cite{Gas:Rah}, 
\cite{Ismbook} are defined by 
\begin{align}
&
 AW_n(\cos \t,\,{\bf t}) = 
  \frac{(t_{1}t_{2}, t_{1}t_{3},t_{1}t_{4};q)_n}{t_1^n} \label{eqAWn} \\  &\times
{}_4\phi_{3}\left(\left. \begin{array}{c}
q^{-n},\,q^{n-1}t_{1}t_{2}t_{3}t_{4}, t_{1}e^{i\t}, t_{1}e^{-i\t}\\
t_{1}t_{2}, t_{1}t_{3}, t_{1}t_{4} \end{array}
  \right| q,\,q\right).  
\nonumber
\end{align}
We note that the algebraic properties of the polynomials 
$\{p_n(x; {\bf t})$ follow from the corresponding properties of 
$\{AW_n(x; {\bf t})$ by using the simple rules 
\begin{equation}
\label{eq2.10}
t_j \to iq/t_j, 1 \le j \le 4, \quad e^{i\t} \to -iz,  \quad  AW_n(\cos \t,\,{\bf t}) 
\to i^n   p_n(x; {\bf t}).
\end{equation}
For example the Ismail--Wilson generating function for the Askey-
Wilson polynomials in \cite{Ism:Wil}, which is reproduced in \cite{Gas:Rah}, \cite{Ismbook}, 
implies the generating function 
\begin{align}
&
\sum_{n=0}^{\infty}\frac{p_{n}(x,\,\mathbf{t})t^{n}}{\left(q,-q^{2}/t_{1}t_{2},-q^{2}/t_{3}t_{4};q\right)_{n}}\label{eqgfnewpol} \\
&= {} _{2}\phi_{1}\begin{pmatrix}\begin{array}{c}
qz/t_{1},\:qz/t_{2}\\
-q^{2}/t_{1}t_{2}
\end{array} & \bigg|q,\,-t/z\end{pmatrix}{}_{2}\phi_{1}\begin{pmatrix}\begin{array}{c}
-q/zt_{3},\:-q/zt_{4}\\
-q^{2}/t_{3}t_{4}
\end{array} & \bigg|q,\,tz\end{pmatrix}.
\nonumber
\end{align}
The symmetry of $p_{n}(x,\,\mathbf{t})$ under 
$t_j \leftrightarrow t_k, 1\le j, k \le 4$ gives additional generating functions. .

The connection relation for our polynomials follow from the corresponding results for the Askey--Wilson polynomials  in \cite{Ask:Wil}, \cite{Gas:Rah}. In particular
\begin{equation}
p_{n}(x,\,\mathbf{s})=\sum_{k=0}^{n}c_{k,n}\left(\mathbf{s},\,\mathbf{t}\right)p_{k}(x,\,\mathbf{t}),\label{eq:finite-connection-1}
\end{equation}
where 
\begin{equation}
\mathbf{t}=t_{1},\ t_{2},\ t_{3},\ t_{4}\label{eq:finite-connection-2}
\end{equation}
and
\begin{equation}
\mathbf{s}=s_{1},\ s_{2},\ s_{3},\ s_{4},\quad s_{4}=t_{4}.\label{eq:finite-connection-3}
\end{equation}
then by the connection coefficient problem for the Askey-Wilson problem
in \cite{Gas:Rah} we get
\begin{align}
& c_{k,n}\left(\mathbf{s},\,\mathbf{t}\right)=\frac{\left(-q^{2}/s_{1}t_{4},-q^{2}/s_{2}t_{4},-q^{2}/s_{3}t_{4},\,q;q\right)_{n}\left(q^{n+3}/s_{1}s_{2}s_{3}t_{4};q\right)_{k}q^{k^{2}-nk}}{\left(-q^{2}/s_{1}t_{4},-q^{2}/s_{2}t_{4},-q^{2}/s_{3}t_{4},\,q,\,q^{k+3}/t_{1}t_{2}t_{3}t_{4};q\right)_{k}\left(q;q\right)_{n-k}}\left(\frac{q}{t_{4}}\right)^{k-n}\label{eq:finite-connection-4}\\
& \times{}_{5}\phi_{4}\begin{pmatrix}\begin{array}{c}
q^{k-n},\,q^{n+k+3}/s_{1}s_{2}s_{3}t_{4},-q^{k+2}/t_{1}t_{4},-q^{k+2}/t_{2}t_{4},-q^{k+2}/t_{3}t_{4}\\
q^{2k+4}/t_{1}t_{2}t_{3}t_{4},-q^{k+2}/s_{1}t_{4},-q^{k+2}/s_{2}t_{4},-q^{k+2}/s_{3}t_{4}
\end{array} & \bigg|q,\ q\end{pmatrix}.\nonumber 
\end{align}

Our next task is to identify  raising and lowering operators for our polynomials. 
The Askey--Wilson 
operator $\D$ and the averaging operator $\A$ are  defined by 
\begin{eqnarray}\label{asko}
\bg
  (\D f)(x) = \frac{\breve{f}(q^{1/2}z)- \breve{f}(q^{-1/2}z)}
{(q^{1/2}- q^{-1/2})(z+1/z)/2}, \\
  (\A f)(x) =\frac{1}{2}
\left[\breve{f}(q^{1/2}z)+ \breve{f}(q^{-1/2}\right].
\eg
\end{eqnarray}
A calculation gives 
\bea
\D (-a/z, az;q)_k =- 2a \frac{1-q^k}{1-q} (-aq^{1/2}/z, aq^{1/2}z;q)_{k-1}
\eea
Therefore 
\bea
\label{eqlowering}
\D p_n(x, {\bf t}) = \frac{2(1-q^{n})(1-q^{n+3}/t_1t_2t_3t_4)}
{(1-q)q^{(n-1)/2}} p_{n-1}(x, q^{-1/2}{\bf t}) .
\eea

 We can also  establish the  raising operator relation 
\begin{align}
&
\frac{1}{w(x ;q^{1/2}{\bf t})}\D w(x;{\bf t})  p_n(x, {\bf t}) \label{eqrais} \\ 
& = \frac{2q^{-n/2}(1-q^2/t_1t_2t_3t_4)(1-q^3/t_1t_2t_3t_4)}{(1-q)\prod_{1 \le j < k \le 4}(1+q/t_jt_k)}p_{n+1}(x, q^{1/2}{\bf t}).
\nonumber
\end{align}
\begin{proof}
It readily follows that 
\begin{align*}
&
\frac{1}{W(x;q^{1/2}{\bf t} } \D( W(x;{\bf t})(-q/t_1z,qz/t_1;q)_k) \\
&=\frac{2t_1q^{1/2}}{(1-q)}
[(1-t_1t_2t_3t_4/q^{k+2})(-q^{1/2}/t_1z,q^{1/2}z/t_1;q)_{k+1}\\
&+t_1t_2t_3t_4/q^{k+2}(1+q^{k+1}/t_1t_2)(1+q^{k+1}/t_1t_3)(1+q^{k+1}/t_1t_4)(-q^{1/2}/t_1z,q^{1/2}z/t_1;q)_k].
\end{align*}
This and \eqref{eqDefASP} establish the desired relation.
\end{proof}
Combining \eqref{eqlowering} and \eqref{eqrais} leads to the  $q$-Sturm-Liouville equation  
\begin{align}
&\frac{1}{w(x,\mathbf{t})}\D [w(x,q^{-1/2}\mathbf{t})\D p_n(x;\,\mathbf{t}|q)]=
\frac{4q^{1-n}(1-q^{n})}{(1-q)^2}\\ 
&\times \frac{(1-q^{4}/t_1t_2t_3t_4)(1-q^{5}/t_1t_2t_3t_4)(1-q^{n+3}/t_1t_2t_3t_4)}{\prod_{1 \le j < k \le 4}(1+q^2/t_jt_k)} 
\; p_n(x;\,\mathbf{t}|q).
\nonumber
\end{align}

By iterating \eqref{eqrais} we derive the Rodrigues type  formula 
\begin{align}
&
\frac{1}{w(x;{\bf t})}\D^n w(x;q^{-n/2}{\bf t})\\
&=q^{-n(n-1)/4}\left(\frac{2}{1-q}\right)^n \frac{(q^4/t_1t_2t_3t_4;q)_{2n}}{\prod_{1 \le j < k \le 4}(-q^2/t_jt_k;q)_n}\;  p_n(x;\,\mathbf{t}|q).
\nonumber
\end{align}

A recursion relation for our polynomials follows from the recurrence relation 
of the Askey-Wilson polynomials \cite{And:Ask:Roy,Gas:Rah,Ismbook}. Indeed we find that 
\begin{equation}
2xp_{n}(x,\,\mathbf{t})=A_{n}p_{n+1}(x,\,\mathbf{t})+B_{n}p_{n}(x,\,\mathbf{t})+C_{n}p_{n-1}(x,\,\mathbf{t}),\quad n\ge0,
\label{eq:finite-recurrence-1}
\end{equation}
and
\begin{equation}
x=\frac{z-1/z}{2},\quad p_{-1}(x,\,\mathbf{t})=0,\quad p_{0}(x,\,\mathbf{t})=1,\label{eq:finite-recurrence-2}
\end{equation}
where
\begin{equation}
A_{n}=\frac{1-q^{n+3}/t_{1}t_{2}t_{3}t_{4}}{\left(1-q^{2n+3}/t_{1}t_{2}t_{3}t_{4}\right)\left(1-q^{2n+4}/t_{1}t_{2}t_{3}t_{4}\right)},\label{eq:finite-recurrence-3}
\end{equation}
\begin{align}
& C_{n}=-\frac{\left(1-q^{n}\right)\prod_{1 \le j < k \le 4}
\left(1+q^{n+1}/t_{j}t_{k}\right)}{\left(1-q^{2n+2}/t_{1}t_{2}t_{3}t_{4}\right)\left(1-q^{2n+3}/t_{1}t_{2}t_{3}t_{4}\right)}\label{eq:finite-recurrence-4}, 
\end{align}
and
\bea
\label{eq:finite-recurrence-5}
\bg
  B_{n}=\frac{t_{1}}{q}-\frac{q}{t_{1}}-\frac{t_{1}}{q}A_{n}
\prod_{j=2}^4\left(1+ q^{n+2}/t_{1}t_{j}\right) 
- \frac{qC_{n}}{t_{1} \prod_{j=2}^4\left(1+q^{n+1}/t_{1}t_{j}\right)}.  
\eg
\eea
Note that $A_n$ and $C_n$ are clearly symmetric in all parameters. What is not clear but is nevertheless true is that $B_n$ is also symmetric in $t_1, t_2, t_3, t_4$.

\section{A Finite Family With Positive Weight Function}
 In this section  we shall treat the case
\begin{eqnarray}
\label{eqcompconj}
	t_1 = \overline{t_2},\quad  t_3 = \overline{t_4}, \quad \Im t_1 \ne 0 \quad \text{and} \quad \Im t_3\ne 0,
\end{eqnarray} 
which implies $w(x,\,\mathbf{t})>0.$
 As we saw in \S 2 the moments $\int_{\mathbb{ R}}x^mw(x)dx, 
 m=0,1, \cdots, N2$ exist when $|t_1t_2t_3t_4|<q^{2N+3}$.  
 
 The Sears transformation. (III.15) in \cite{Gas:Rah} shows that $p_n$ is symmetric in the parameters $t_1, t_2, t_3, t_4$. On the other hand it is clear that $p_n$ is a polynomial in $1/t_2, 1/t_3, 1/t_4$, hence it 
 is a also polynomial in $1/t_1$. This means that $p_n$ is a polynomial in the elementary symmetric functions of $1/t_1, 1/t_2, 1/t_3,1/ t_4$, with  real coefficients.  Therefor $p_n$ is a real polynomial of $x$ when $t_1, t_2, t_3, t_4$ are  chosen as  in \eqref{eqcompconj}.
  
Let $\sigma_j$ be the elementary symmetric functions of $t_1, t_2, t_3, t_4$, that is 
\bea
\bg
\sigma_1 = \sum_{j=1}^4 t_j, \quad  \sigma_2= \sum_{1\le j < k \le 4} t_j t_k, \\
\sigma_3 = \sum_{1\le i<  j < k \le 4} t_1t_j t_k, \quad   \sigma_4 = t_1t_2t_3t_4.
\eg
\eea
It is straightforward to show that the weight function satisfies the 
divided difference equations
\begin{eqnarray}
\bg
\frac{\D w(x;q^{-1/2}{\bf t})}{w(x;{\bf t})}
=\frac{2q(1-\sigma_4/q^4)(1- \sigma_4/q^5)}
{(q-1)\prod_{1 \le j < k \le 4}(1+t_jt_k/q^2)}\\
\times \left[2(1-\sigma_4/q^4)x-  \sigma_1/q - \sigma_3/ q^3\right]
\eg
\end{eqnarray}
and 
	\begin{eqnarray}
\bg
\frac{\A w(x;q^{-1/2}{\bf t})}{w(x;{\bf t})} = 
\frac{q^{1/2}(1-\sigma_4/q^4)(1-\sigma_4/q^5)}
{\prod_{1 \le j < k \le 4}(1+t_jt_k/q^2)}\\ 
\times [(2x^2+1)(1+\sigma_4/q^4)+
x (\sigma_3/q^3-\sigma_1/q)+\sigma_2/q^2].
\eg
\end{eqnarray}

 The following lemma is the analogue of integration by parts, \cite{Ism93}.
 \begin{lem}\label{interchange}
	Let $f$ and $g$ with $\int_0^\infty \breve{f}(z)\breve{g}(q^{1/2}z) \frac{dz}{z}$ and $\int_0^\infty \breve{f}(z)\breve{g}
 (q^{-1/2}z)\frac{dz}{z}$ existed. We have 
	\begin{eqnarray}
	\int_{\R}(\D f)(x)g(x)dx = -\int_{\R}(\D g)(x)f(x)dx.
	\end{eqnarray}
\end{lem}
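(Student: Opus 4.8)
The plan is to establish the integration-by-parts identity by unfolding the definition of the Askey--Wilson operator and performing a change of variable $z \mapsto q^{\pm 1/2} z$ inside the resulting integrals. Recall from \eqref{eqxofz} that $x = (z-1/z)/2$, so as $z$ ranges over $(1,\infty)$ the variable $x$ ranges over $(0,\infty)$, and the substitution $2x\,dx = (z + 1/z)(dz/z)\cdot\tfrac12\cdot 2$ — more precisely $dx = \tfrac12(1 + 1/z^2)\,dz = \tfrac12(z+1/z)\frac{dz}{z}$ — converts $\int_{\R}(\cdot)\,dx$ into $\int_0^\infty (\cdot)\,\tfrac12(z+1/z)\frac{dz}{z}$, using that the integrand is even in $x$ (equivalently invariant under $z \mapsto 1/z$). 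The key cancellation is that the denominator $(q^{1/2}-q^{-1/2})(z+1/z)/2$ appearing in $(\D f)(x)$ exactly cancels the Jacobian factor $\tfrac12(z+1/z)\frac{dz}{z}$.

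First I would write, for $x \in \R$ corresponding to $z$ via \eqref{eqdfz},
\begin{equation*}
\int_{\R} (\D f)(x) g(x)\, dx = \frac{1}{q^{1/2}-q^{-1/2}}\int_0^\infty \bigl(\breve f(q^{1/2}z) - \breve f(q^{-1/2}z)\bigr)\breve g(z)\,\frac{dz}{z},
\end{equation*}
where I have absorbed the evenness of the integrand to pass to $(0,\infty)$ and cancelled the $(z+1/z)/2$ factor against the Jacobian. Next I would split the right-hand side into two integrals and in the first make the substitution $z \mapsto q^{-1/2}z$ and in the second $z \mapsto q^{1/2}z$; since $dz/z$ is invariant under scaling, each integral becomes $\int_0^\infty \breve f(z)\,\breve g(q^{\mp 1/2}z)\,dz/z$ with the appropriate sign, and recombining gives
\begin{equation*}
\int_{\R} (\D f)(x) g(x)\, dx = \frac{1}{q^{1/2}-q^{-1/2}}\int_0^\infty \breve f(z)\bigl(\breve g(q^{-1/2}z) - \breve g(q^{1/2}z)\bigr)\,\frac{dz}{z} = -\int_{\R} (\D g)(x) f(x)\, dx,
\end{equation*}
where the last equality reverses the opening computation with the roles of $f$ and $g$ exchanged.

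The main obstacle is justifying the interchange of the scaling substitution with the limits of integration — that is, making rigorous that no boundary terms appear at $z=0$ or $z=\infty$. This is precisely where the two hypotheses on $f$ and $g$ enter: the assumed existence (absolute convergence) of $\int_0^\infty \breve f(z)\breve g(q^{\pm 1/2}z)\,dz/z$ guarantees that each of the four integrals produced after splitting is absolutely convergent, so Fubini/dominated-convergence-type reasoning legitimizes the rearrangement and the change of variables, and the evenness reduction to $(0,\infty)$ is unambiguous. A secondary point to handle carefully is the branch convention in \eqref{eqdfz}: one must check that the map $x \mapsto z$ with $|z| \le |1/z|$ is a bijection of $(0,\infty)$ onto $(1,\infty)$ with the stated Jacobian and that $\breve f, \breve g$ — the Laurent-type extensions of $f, g$ in $z$ — are well defined along the contour. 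Once these convergence issues are dispatched, the identity is a one-line symmetry argument, exactly parallel to the $q^{-1}$-Hermite case in \cite{Ism93}.
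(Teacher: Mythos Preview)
The paper does not actually supply a proof of this lemma; it merely states the result and cites \cite{Ism93}. Your argument is the standard one and is correct at the level of the displayed equations: the Jacobian $dx = \tfrac12(z+1/z)\,dz/z$ cancels the denominator of $\D$, and the scale-invariance of $dz/z$ under $z\mapsto q^{\pm 1/2}z$ does the rest.

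One small inaccuracy in your write-up: you appeal to the integrand being ``even in $x$'' to pass from $\int_\R$ to $\int_0^\infty$. That claim is generally false --- $(\D f)(x)g(x)$ need not be even --- and it is also unnecessary. Taking the branch $z=x+\sqrt{x^2+1}$ already gives a bijection of $\R$ onto $(0,\infty)$ with the stated Jacobian, so $\int_\R F(x)\,dx=\int_0^\infty \breve F(z)\,\tfrac12(z+1/z)\,dz/z$ holds directly without any symmetry hypothesis. (Note also that this positive branch has $|z|\ge 1$, opposite to the convention in \eqref{eqdfz}; this is harmless for the argument but worth flagging.) With that correction your proof is complete and matches the approach in \cite{Ism93}.
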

 
When we apply Theorem 21.6.3 in  \cite{Ismbook},  and use the above lemma, Lemma \ref{interchange},  we establish  the following theorem.

\begin{thm}We assume that $g(x;t_1,t_2,t_3,t_4)$ is a positive function and its moments  $\int_\R x^n g(x;t_1,t_2,t_3,t_4)dx $ exist for some non-negative integers $n$ with $0\le n \le 2N$ when $|t_1t_2t_3t_4|<q^{2N+3}$. If $g(x;\, {\bf t})$ satisfies the following equations
	\begin{eqnarray}
	\bg
	\frac{\D g(x;q^{-1/2}{\bf t})}{g(x;{\bf t})}=
	\frac{2q(1-\sigma_4/q^4)(1- \sigma_4/q^5)}
{(q-1)\prod_{1 \le j < k \le 4}(1+t_jt_k/q^2)}\\
\times \left[2(1-\sigma_4/q^4)x-  \sigma_1/q - \sigma_3/ q^3\right]
 \\
\frac{\A g(x;q^{-1/2}{\bf t})}{g(x;{\bf t})}
=
\frac{q^{1/2}(1-\sigma_4/q^4)(1-\sigma_4/q^5)}
{\prod_{1 \le j < k \le 4}(1+t_jt_k/q^2)}\\  
\times [(2x^2+1)(1+\sigma_4/q^4)+
x (\sigma_3/q^3-\sigma_1/q)+\sigma_2/q^2].
\eg
\end{eqnarray}
then $g(x; {\bf t})$ is a weight function for polynomials 
$\{p_n(x;t_1,t_2,t_3,t_4 \mid q)\}_{n=0}^N$.
\end{thm}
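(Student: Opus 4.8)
The plan is to show that the two divided difference equations assumed for $g$ force $g$ into exactly the role played by $w$ in the $q$-Sturm--Liouville theory of Section 2, so that orthogonality becomes automatic; the passage from the resulting structural facts to the statement that $g$ is a weight function is then what Theorem 21.6.3 of \cite{Ismbook} records, and Lemma \ref{interchange} is the self-adjointness ingredient that feeds it.

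First I would observe that, since $(\mathcal{D}_q h)(x)$ and $(\mathcal{A}_q h)(x)$ are two invertible linear combinations of $\breve{h}(q^{1/2}z)$ and $\breve{h}(q^{-1/2}z)$, the assumed values of $\mathcal{D}_q g(x;q^{-1/2}{\bf t})/g(x;{\bf t})$ and $\mathcal{A}_q g(x;q^{-1/2}{\bf t})/g(x;{\bf t})$ are equivalent to knowing the two ratios $\breve{g}(q^{\pm1/2}z;{\bf t})/g(x;q^{1/2}{\bf t})$ (apply the hypotheses with ${\bf t}$ replaced by $q^{1/2}{\bf t}$). But those ratios are precisely, and are the only feature of the weight that enters, the input to the computation that produced the raising relation \eqref{eqrais}. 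Hence the very same computation gives, with $g$ in place of $w$,
\[
\frac{1}{g(x;q^{1/2}{\bf t})}\mathcal{D}_q\big(g(x;{\bf t})\,p_n(x,{\bf t})\big)=\frac{2q^{-n/2}(1-q^{2}/\sigma_4)(1-q^{3}/\sigma_4)}{(1-q)\prod_{1\le j<k\le4}(1+q/t_jt_k)}\,p_{n+1}(x,q^{1/2}{\bf t}).
\]

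Second, combining this with the lowering relation \eqref{eqlowering}, which is a property of the polynomials alone and is insensitive to the weight, gives the $q$-Sturm--Liouville equation with $g$ replacing $w$,
\[
\frac{1}{g(x,{\bf t})}\mathcal{D}_q\big[g(x,q^{-1/2}{\bf t})\,\mathcal{D}_q p_n(x;{\bf t})\big]=\lambda_n\,p_n(x;{\bf t}),
\]
\[
\lambda_n=\frac{4q^{1-n}(1-q^{n})(1-q^{4}/\sigma_4)(1-q^{5}/\sigma_4)(1-q^{n+3}/\sigma_4)}{(1-q)^{2}\prod_{1\le j<k\le4}(1+q^{2}/t_jt_k)}.
\]
Then I would run the standard self-adjointness argument through Lemma \ref{interchange}: for $0\le m,n\le N$ with $m\neq n$, applying the lemma with its $f$ equal to $g(x,q^{-1/2}{\bf t})\,(\mathcal{D}_q p_n)(x)$ and its $g$ equal to $p_m$ yields
\[
\lambda_n\int_{\R}p_n(x)\,p_m(x)\,g(x,{\bf t})\,dx=\int_{\R}\mathcal{D}_q\big[g(x,q^{-1/2}{\bf t})\,\mathcal{D}_q p_n\big]\,p_m\,dx=-\int_{\R}g(x,q^{-1/2}{\bf t})\,(\mathcal{D}_q p_n)(\mathcal{D}_q p_m)\,dx,
\]
whose last member is symmetric under $m\leftrightarrow n$; subtracting the two versions gives $(\lambda_n-\lambda_m)\int_{\R}p_n p_m\,g(x,{\bf t})\,dx=0$. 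Since $\lambda_0,\dots,\lambda_N$ are pairwise distinct under the parameter restriction, the integral vanishes, and because $g>0$ and the required moments exist, $p_0,\dots,p_N$ form a genuine orthogonal system in $L^2(g\,dx)$, which is the assertion that $g$ is a weight function for $\{p_n(x;t_1,t_2,t_3,t_4\mid q)\}_{n=0}^N$.

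I expect the main obstacle to be the convergence bookkeeping intrinsic to the finite-family setting. To apply Lemma \ref{interchange} one must verify its hypotheses, namely the existence of the $q$-integrals $\int_0^\infty \breve{f}(z)\,\breve{p}_m(q^{\pm1/2}z)\,dz/z$ for $f=g(x,q^{-1/2}{\bf t})\,(\mathcal{D}_q p_n)(x)$; since $\mathcal{D}_q p_n$ has degree $n-1$ and $\mathcal{D}_q p_m$ has degree $m-1$, this reduces to $g(x,q^{-1/2}{\bf t})$ having moments up to order $n+m-2\le 2N-2$, so one must check that the moment hypothesis on $g(x,{\bf t})$, together with the two Pearson equations (which tie $g(x,q^{-1/2}{\bf t})$ to $g(x,{\bf t})$), does propagate to supply enough moments. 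The remaining point requiring care is that $\lambda_m=\lambda_n$ must force $m=n$ for $0\le m,n\le N$; this is where the inequality $|t_1t_2t_3t_4|<q^{2N+3}$ is used, to exclude accidental coincidences among the $\lambda_n$.
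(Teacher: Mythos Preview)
Your proposal is correct and follows essentially the same approach as the paper: the paper's entire proof is the single sentence ``When we apply Theorem 21.6.3 in \cite{Ismbook}, and use the above lemma, Lemma \ref{interchange}, we establish the following theorem,'' and what you have written is precisely an unpacking of that citation---you rebuild the raising relation for $g$ from the two Pearson equations, feed it together with the weight-independent lowering relation into the $q$-Sturm--Liouville form, and then invoke Lemma \ref{interchange} for self-adjointness and hence orthogonality. Your added discussion of the moment bookkeeping and the distinctness of the eigenvalues $\lambda_n$ is exactly the verification one needs to make the cited theorem apply in this finite-family setting, and is more explicit than anything the paper writes out.
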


\section{Asymptotics}
 The Askey-Wilson polynomials also have an   $_{8}W_{7}$ representation \cite{Gas:Rah}; which, in view of \eqref{eq2.10},  yields the representation 
\begin{align}
& p_{n}(x,\,\mathbf{t})=\frac{\left(-q^{2}/t_{1}t_{2},\,-q^{2}/t_{1}t_{3},\,-q^{2}/t_{2}t_{3},\,-q/t_{4}z;\,q\right)_{n}}{\left(-q^{3}z/t_{1}t_{2}t_{3};\,q\right)_{n}}z^{n}\label{eq:8w7-1}\\
& \times{}_{8}W_{7}\left(-q^{2}z/t_{1}t_{2}t_{3};\,qz/t_{1},\,qz/t_{2},\,qz/t_{3},\,q^{n+3}/t_{1}t_{2}t_{3}t_{4},\,q^{-n};\,q,\,t_{4}/z\right)\nonumber 
\end{align}
and we also have the representation 
\begin{align}
p_{n}(x,\,\mathbf{t}) & =\left(-\frac{q^{2}}{t_{2}t_{3}},-\frac{q^{2}}{t_{2}t_{4}},-\frac{q^{2}}{t_{3}t_{4}};q\right)_{n}\left(Q_{n}\left(-\frac{1}{z},\,\mathbf{t}\right)+Q_{n}\left(z,\,\mathbf{t}\right)\right),\label{eq:8w7-2}
\end{align}
where
\bea
\label{eq:8w7-3}
\bg
  Q_{n}\left(w,\,\mathbf{t}\right)=\frac{\left(-\frac{wq^{n+3}}{t_{1}t_{2}t_{3}},-\,\frac{wq^{n+3}}{t_{2}t_{3}t_{4}},\frac{wq^{n+2}}{t_{2}},\frac{wq^{n+2}}{t_{3}},\,-\frac{q}{wt_{1}},\,-\frac{q}{wt_{2}},\,-\frac{q}{wt_{3}},\,-\frac{q}{wt_{4}};q\right)_{\infty}}{\left(-\frac{q^{2}}{t_{2}t_{3}},-\frac{q^{2}}{t_{2}t_{4}},-\frac{q^{2}}{t_{3}t_{4}},-\frac{q^{n+2}}{t_{1}t_{2}},-\frac{q^{n+2}}{t_{1}t_{3}},\,q^{n+1},\,\frac{w^{2}q^{n+3}}{t_{2}t_{3}},-\frac{1}{w^{2}};q\right)_{\infty}}\\
  \times w^{n}{}_{8}W_{7}\left(\frac{w^{2}q^{n+2}}{t_{2}t_{3}};\,-\frac{q^{n+2}}{t_{2}t_{3}},\frac{qw}{t_{2}},\frac{qw}{t_{3}},\,-t_{1}w,\,-t_{4}w;\,q,\,-\frac{q^{n+2}}{t_{1}t_{4}}\right).
  \eg
  \eea

For each fixed $z\in\mathbb{C}$ the form \eqref{eq:8w7-3} leads to
\begin{equation}
Q_{n}\left(w,\,\mathbf{t}\right)\approx\frac{w^{n}B\left(w^{-1}\right)}{\left(-\frac{q^{2}}{t_{2}t_{3}},-\frac{q^{2}}{t_{2}t_{4}},-\frac{q^{2}}{t_{3}t_{4}};q\right)_{\infty}},\quad n\to\infty,   
\label{eq:8w7-4}
\end{equation}
where
\begin{equation}
B\left(w\right)=\frac{\left(-\frac{qw}{t_{1}},-\frac{qw}{t_{2}},-\frac{qw}{t_{3}},-\frac{qw}{t_{4}};q\right)_{\infty}}{\left(-w^{2};q\right)_{\infty}}.\label{eq:8w7-5}
\end{equation}

Let $z_1, z_2$ be the roots of $2x = z-1/z$, with $|z_1| \le |z_2|$
It is easy to see that $|z_1| = |z_2|$ if and only if $x$ is purely imaginary and $ix \in [-1,1]$. Moreover $z_1 = z_2$ if ad only 
if $x = \pm i$. It is clear that $z_1z_2 =-1$. Therefore 
\bea
p_{n}(x,\,\mathbf{t}) = z_2^n \frac{\left(qz_1/t_{1}, qz_1/t_{2}, 
qz_1/ t_{3}, qz_1/t_{4};q\right)_{\infty}}{\left(-z_1^{2};q\right)_{\infty}}\left[1+ o(1)\right], 
\eea
if $ix \notin [-1, 1]$. If $ix \in (-1, 1)$ we let $z_1= ie^{i\t}, z_2 = 
 ie^{-i\t}$, with $\t \in (0, \pi)$.   Then 
\bea
\label{eqasyOsc}
p_{n}(x,\,\mathbf{t}) = 2 C(x) \cos(n\t + \f-n\pi/2)[1+o(1)],   
\eea
where $C(x) \ge 0$, and 
 \bea 
 C(x) e^{i\f} =  \frac{\left(qi e^{-i\t}/t_{1}, qi e^{-i\t}/t_{2}, 
qi e^{-i\t}/ t_{3}, qi e^{-i\t}/t_{4};q\right)_{\infty}}{\left( e^{-2i\t};q\right)_{\infty}}. 
 \eea
We note that the above asymptotic formulas can also be 
derived  from the generating function \eqref{eqgfnewpol} using 
Darboux's asymptotic method, \cite{Olv}. 

When $n$ is large we know that the polynomials are no longer 
orthogonal but \eqref{eqasyOsc} indicates that the polynomials 
have their zeros dense in the segment connecting $\pm i$.

We now derive the large $N$ asymptotics of $p_N$, when $t_1$ and $t_2$ are of the form $t_1q^N$ and $t_2q^N$, respectively. 

First, we recall the transformation \cite{Gas:Rah} 
\begin{eqnarray}
\bg
	{ }_{4} \phi_{3}\left(\left.\begin{array}{c}
	q^{-n}, a, b, c \\
	d, e, f
	\end{array} \right| q, q\right) \\
	=\left(\frac{bc}{ d}\right)^{n} \frac{(d e / b c, d f / b c ; q)_{n}}{(e, f ; q)_{n}}{ }_{4} \phi_{3}\left(\left.\begin{array}{c}
	q^{-n}, a, d / b, d / c \\
	d, d e / b c, d f / b c
	\end{array} \right| q, q\right).
	\eg
\end{eqnarray}
Let $a=qz/t_1, b=-q/t_1z, c=q^{n+3}/t_1t_2t_3t_4, d=-q^2/t_1t_3, e=-q^2/t_1t_2$ and $f=-q^2/t_1t_4$, and then we have
\begin{eqnarray}
\bg
{}_{4}\phi_{3}\begin{pmatrix}\begin{array}{c}
q^{-n},\,q^{n+3}/t_{1}t_{2}t_{3}t_{4},-q/t_{1}z,\,qz/t_{1}\\
-q^{2}/t_{1}t_{2},-q^{2}/t_{1}t_{3},-q^{2}/t_{1}t_{4}
\end{array} & \bigg|q,\,q\end{pmatrix}	\\= (q^{n+2}/t_1t_2t_4z)^n \frac{(-q^{-n}t_4z,-q^{-n}t_2z;q)_n}{(-q^2/t_1t_2,-q^2/t_1t_4;q)_n}{}_{4}\phi_{3}\begin{pmatrix}\begin{array}{c}
q^{-n},\,-t_2t_4q^{-(n+1)},\,qz/t_{3},\,qz/t_{1}\\
-q^{2}/t_{1}t_{3},-q^{-n}zt_{4},-q^{-n}zt_{2}
\end{array} & \bigg|q,\,q\end{pmatrix}
	\eg
\end{eqnarray}
Next, we define
\begin{eqnarray}
\bg
	v_n = \frac{(-t_1t_3/q, -q/zt_2;q)_n}{(q,t_3/z;q)_n}
	{}_{4}\phi_{3}\begin{pmatrix}\begin{array}{c}
	q^{-n},\,-t_2t_4/q,\,q^{1-n}z/t_{3},\,qz/t_{1}\\
	-q^{2-n}/t_{1}t_{3},-zt_{4},-q^{-n}zt_{2}
	\end{array} & \bigg|q,\,q\end{pmatrix}
	\\=\sum_{k=0}^{n}\frac{(qz/t_1,-t_2t_4/q;q)_k (-t_1t_3/q,-q/zt_2;q)_{n-k}}{(q,-zt_4;q)_k (q,t_3/z;q)_{n-k}}(t_1/t_2)^k.
\eg
\end{eqnarray} 
Therefore, we have 
\begin{eqnarray}
\bg
\Sum v_n t^n \\
= 	{}_{2}\phi_{1} \left(\left. \begin{array}{c}
	qz/t_1,\,-t_2t_4/q\\
	-zt_4
	\end{array}  \right| q,\,\frac{t_1t}{t_2}\right)
	  {}_{2}\phi_{1} \left(\left.\begin{array}{c}
	-q/zt_2,\,-t_1t_3/q\\
	t_3/z
	\end{array} \right|q,\,t \right).
	\eg
\end{eqnarray}
We consider transform 
\begin{eqnarray}
	{ }_{2} \phi_{1}(a, b ; c ; q, z)=\frac{(b, a z ; q)_{\infty}}{(c, z ; q)_{\infty}}{ }_{2} \phi_{1}(c / b, z ; a z ; q, b)
\end{eqnarray}
which leads to
\begin{eqnarray}
\bg
	\Sum v_n t^n = \frac{(-t_1t_3/q,-t_2t_4/q, qzt/t_2,-tq/zt_2;q)_\infty}{(t_3/z,-zt_4,t,tt_1/t_2;q)_\infty} {}_{2}\phi_{1}\begin{pmatrix}\begin{array}{c}
	qz/t_2,\,tt_1/t_2\\
	tqz/t_2
	\end{array} & \bigg|q,\,-\frac{t_2t_4}{q}\end{pmatrix} \\
	\times 
	{}_{2}\phi_{1}\begin{pmatrix}\begin{array}{c}
	qz/t_1,\,t\\
	-tq/zt_2
	\end{array} & \bigg|q,\,-\frac{t_1t_3}{q}\end{pmatrix}
	\eg
\end{eqnarray}
If we apply Darboux's method, we get 
\begin{eqnarray}
\bg
	\lim_{n \rightarrow \infty } v_n = \frac{(-t_1t_3/q,-t_2t_4/q ;q)_\infty}{(t_3/z,-zt_4;q)_\infty}\Big[\frac{(qz/t_2,-q/zt_2;q)_\infty}{(q,t_1/t_2;q)_\infty}{}_{2}\phi_{1}\begin{pmatrix}\begin{array}{c}
	qz/t_2,\,t_1/t_2\\
	qz/t_2
	\end{array} & \bigg|q,\,-\frac{t_2t_4}{q}\end{pmatrix}\\
	+ \left(\frac{t_1}{t_2}\right)^n \frac{(qz/t_1,-q/zt_1;q)_\infty}{(q,t_2/t_1;q)_\infty}	{}_{2}\phi_{1}\begin{pmatrix}\begin{array}{c}
	qz/t_1,\,t_2/t_1\\
	-q/t_1z
	\end{array} & \bigg|q,\,-\frac{t_1t_3}{q}\end{pmatrix} \Big].
	\eg
\end{eqnarray}
Note that
\begin{eqnarray}
\bg
  p_n(x;t_1,t_2,t_3q^n,t_4q^n \mid q)= (q,t_3/z,-t_4z;q)_n q^{-n^2}(q^2/t_1t_2t_4)^n v_n.	
	\eg
\end{eqnarray}
Hence, we can establish the following formula
  \begin{eqnarray}
  \notag
\bg
\frac{ (q^{n-2}t_1t_2t_3t_4)^n}{.(q,-t_1t_3/q,-t_2t_4/q ;q)_\infty} 
p_n(x ; t_1,t_2,q^nt_3, q^n t_4 \mid q)\\
=\left[  t_2^n\frac{(qz/t_2,-q/zt_2;q)_\infty}{(q,t_1/t_2;q)_\infty}{}_{2}\phi_{1}\left(\left. \begin{array}{c}
	qz/t_2,\,t_1/t_2\\
	qz/t_2
	\end{array} \right|q,\,-\frac{t_2t_4}{q} \right) \right.\\ 
\left.+  t_1 ^n \frac{(qz/t_1,-q/zt_1;q)_\infty}{(q,t_2/t_1;q)_\infty}	{}_{2}\phi_{1}\left(\left.\begin{array}{c}
	qz/t_1,\,t_2/t_1\\
	-q/t_1z
	\end{array} \right|q,\,-\frac{t_1t_3}{q}\right)\right][1+o(1)]
	\eg
\end{eqnarray}

\section{An Infinite Family of Orthogonal Polynomials}
 
With the notation  
\bea
\mathbf{t}=(t_1, t_2, t_3)
\eea
 we set 
\bea
\bg
\label{eq:vn-definition}
 V_{n}(x,\,\mathbf{t}|q):=V_{n}(x;\,t_{1},\,t_{2},\,t_{3}|q)\\
 =\left(\frac{t_{1}}{q}\right)^{n} \frac{\left(-q^{2}/t_{1}t_{3};q\right)_{n}}{\left(-q^{2}/t_{2}t_{3};q\right)_{n}}
{}_{3}\phi_{2}\left(\left. \begin{matrix} 
q^{-n},-q/t_{1}z,\,qz/t_{1}\\
-q^{2}/t_{1}t_{3},-q^{2}/t_{1}t_{2}
 \end{matrix} \right| q,\,-\frac{q^{n+2}}{t_{2}t_{3}}\right),
\eg
\eea
and 
\bea
\label{eqneww}
 w(x,\,\mathbf{t}) =\frac{2z \prod_{j=1}^{3}(-t_{j}z,t_{j}/z;q)_{\infty}}{(q,-z^{2},-q/z^{2};q)_{\infty}\log q^{-1}\prod_{1\le j<k\le3}\left(-t_{j}t_{k}/q;q\right)_{\infty}}.   
\eea
It is clear that 
\[
V_{n}(x;\mathbf{t}|q)=\lim_{t_{4}\to0}  
 \frac{p_{n}(x,\,\mathbf{t})}{\left(-q^{2}/t_{1}t_{4},-q^{2}/t_{1}t_{2},-q^{2}/t_{2}t_{3};q\right)_{n}}.
\] 
 When $t_{1},\,t_{2},\,t_{3}>0$ the orthogonality relation 
\eqref{eq:finite-orthogonality}   yields 
\begin{align}
\int_{\mathbb{R}}V_{n}(x;\mathbf{t}\vert q)V_{m}(x;t\vert q)d\mu(x;\mathbf{t}\vert q) & =\frac{(q,-q^2/t_1t_3;q)_n}{(-q^2/t_1t_2,-q^2/t_2t_3;q)_n}\left(\frac{t_1^2}{q^3}\right)^n \delta_{m,n},
\label{eq:22}
\end{align}
where $x$ is as in \eqref{eqxofz}   and 
\begin{align}
& d\mu(x;\mathbf{t}\vert q)=\frac{2z \prod_{j=1}^{3}(-t_{j}z,\,t_{j}/z;q)_{\infty}dx}{(q,-z^{2},-q/z^{2};q)_{\infty}\log q^{-1}\prod_{1\le j<k\le3}\left(-t_{j}t_{k}/q;q\right)_{\infty}}.
\label{eq:23}
\end{align}
 Note that the  measure defined above is a signed measure. 

Next we record the raising and lowering operators for  the polynomials $\{V_n(x;\,\mathbf{t}|q)\}$ by taking the limit as $t_4 \to 0+$  of the corresponding formulas in \S 2. The result is 
\begin{eqnarray}
\label{dqvn}
\D V_n(x;\,\mathbf{t}|q)= \frac{-2q^{(n+3)/2}(1-q^n)V_{n-1}(x;\,q^{-1/2}t_1, q^{-1/2}t_2, q^{-1/2}t_3 |q)}{(1-q)(1+q^2/t_1t_2)(1+q^2/t_2t_3)(t_2t_3)},
\end{eqnarray}       
and 
\begin{eqnarray}\label{rasP}
\bg
\frac{1}{w(x ;q^{1/2}{\bf t})}\D w(x;{\bf t})  V_n(x, {\bf t}) =. 
\frac{2q^{3-n/2}\; V_{n+1}(x, q^{1/2}{\bf t})}{(1-q)t_1^2t_2t_3(1+q/t_1t_3)}
\eg
\end{eqnarray}
Therefore  \eqref{rasP} leads  to the following  $q$-Sturm-Liouville operator equation 
\begin{eqnarray}\label{raisvn}
\bg
\frac{1}{w(x,t_1,t_2,t_3,0)}\D [w(x,q^{-1/2}t_1,q^{-1/2}t_2,q^{-1/2}t_3,0)\D V_n(x;\,\mathbf{t}|q)]\\
= \frac{-4q^{7}}{(1-q)^2t_1^2t_2^2t_3^2} \frac{(1-q^{n})}{\prod_{1 \le j < k \le 3}(1+q^2/t_jt_k)}V_n(x;\,\mathbf{t}|q).
\eg
\end{eqnarray}
By iterating \eqref{rasP} we derive the Rodrigues-type formula 
\begin{eqnarray}
\bg
\frac{1}{w(x;{\bf t})}\D w(x;q^{-n/2}{\bf t})=q^{-n(n-1)/4}\left(\frac{2}{1-q}\right)^n\\ \frac{(q^4/t_1t_2t_3t_4;q)_{2n}}{(-q^2/t_1t_3,-q^2/t_2t_4,-q^2/t_3t_4;q)_n}P_n(x;\,\mathbf{t}|q).
\eg
\end{eqnarray}
Let $t_4 \rightarrow 0$, we obtain another  Rodrigues formula 
\begin{eqnarray}
\label{rodvn}
\bg
\frac{1}{w(x,t_1,t_2,t_3,0)}\D^{n}[w(x,q^{-n/2}t_1,q^{-n/2}t_2,q^{-n/2}t_3,0)]=\\ \left(\frac{2}{1-q}\right)^n\frac{q^{3n^2/4+17n/4}}{t_1^{2n}t_2^nt_3^{n}(-q^2/t_1t_3;q)_n}V_n(x;\mathbf{t}| q).
\eg
\end{eqnarray}

We now prove an orthogonality relation for $\{V_n(x; t_1, t_2, t_3)\}$. 
To prove the orthogonality relations, we need a lemma to compute the total mass at first.  
\begin{lem}\label{totmass}
	$$\int_\R (-qt_1z,qt_1/z,-qt_2z,qt_2/z,-qt_3z,qt_3/z;q)_\infty d\mu_{\alpha}(x)=(-qt_1t_2,-qt_1t_3,-qt_2t_3;q)_\infty$$
	where $d\mu_{\alpha}(x)$ is as in \eqref{mass}.
\end{lem}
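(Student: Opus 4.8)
The plan is to evaluate the left-hand side directly as a bilateral $q$-series over the mass points of $\mu_\alpha$ and then recognize the result as a confluent special case of Bailey's ${}_{6}\psi_{6}$ summation theorem, as hinted in the text after \eqref{eqortmua}.

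First I would insert a support point $x=x_n(\alpha)=(q^{-n}/\alpha-\alpha q^{n})/2$ into the integrand. There $\{z,1/z\}=\{q^{-n}/\alpha,\ \alpha q^{n}\}$, and since each factor $(-qt_jz,\,qt_j/z;q)_\infty$ is invariant under $z\leftrightarrow 1/z$, we may take $z=q^{-n}/\alpha$. Using $(bq^{-n};q)_\infty=(bq^{-n};q)_n(b;q)_\infty$ together with the reversal identity $(aq^{-n};q)_n=(q/a;q)_n(-a)^nq^{-\binom{n+1}{2}}$ recorded above, each $(-qt_jz,qt_j/z;q)_\infty$ splits as $(-t_jq/\alpha,\,t_j\alpha q;q)_\infty$ times the $n$-dependent quantity $\dfrac{(-\alpha/t_j;q)_n}{(t_j\alpha q;q)_n}\,(t_jq/\alpha)^n\,q^{-\binom{n+1}{2}}$. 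Taking the product over $j=1,2,3$, multiplying by $m_n(\alpha)$ from \eqref{mass}, and collecting the powers of $q$ and $\alpha$ (the net quadratic exponent collapses to $\binom{n+1}{2}=\binom n2+n$), the left-hand side becomes
\[
\frac{\prod_{j=1}^{3}(-t_jq/\alpha,\,t_j\alpha q;q)_\infty}{(-\alpha^{2},-q/\alpha^{2},q;q)_\infty}\;\Sigma,\qquad
\Sigma:=\sum_{n=-\infty}^{\infty}\frac{(-\alpha/t_1,-\alpha/t_2,-\alpha/t_3;q)_n}{(\alpha q t_1,\alpha q t_2,\alpha q t_3;q)_n}\,q^{\binom n2}(\alpha q\,t_1t_2t_3)^{n}\bigl(1+\alpha^{2}q^{2n}\bigr).
\]
Convergence is automatic: on the support of $\mu_\alpha$ the integrand grows only like $q^{-n^{2}/2}$, which is dominated by $m_n(\alpha)=O(q^{2n^{2}})$, so the interchange of integral and series, and the series $\Sigma$ itself, converge absolutely.

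Next I would sum $\Sigma$. It is a very-well-poised bilateral series: set $a=-\alpha^{2}$ and $b_j=-\alpha/t_j$, so that $aq/b_j=\alpha q t_j$, $b_1b_2b_3=-\alpha^{3}/(t_1t_2t_3)$, $a^{2}q/(b_1b_2b_3)=-\alpha q\,t_1t_2t_3$, hence $1+\alpha^{2}q^{2n}=1-aq^{2n}$ and $(-1)^{n}\bigl(a^{2}q/(b_1b_2b_3)\bigr)^{n}=(\alpha q\,t_1t_2t_3)^{n}$. Thus $\Sigma$ equals $1+\alpha^{2}=1-a$ times the $e\to\infty$ confluent limit of Bailey's ${}_{6}\psi_{6}$ with parameters $b=b_1,c=b_2,d=b_3$ and $e$ free; in that limit $(e;q)_n\bigl(a^{2}q/(bcde)\bigr)^{n}\to(-1)^{n}q^{\binom n2}\bigl(a^{2}q/(bcd)\bigr)^{n}$ and $(aq/e;q)_n\to1$ term by term, which is legitimate by dominated convergence. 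The right-hand side of Bailey's formula then degenerates to
\[
\Sigma=(1+\alpha^{2})\,\frac{(aq,\ q/a,\ q,\ aq/b_1b_2,\ aq/b_1b_3,\ aq/b_2b_3;q)_\infty}{(aq/b_1,\ aq/b_2,\ aq/b_3,\ q/b_1,\ q/b_2,\ q/b_3;q)_\infty}.
\]
Substituting $a=-\alpha^{2}$, $b_j=-\alpha/t_j$ gives $aq=-\alpha^{2}q$, $q/a=-q/\alpha^{2}$, $aq/b_j=\alpha q t_j$, $q/b_j=-qt_j/\alpha$ and $aq/b_jb_k=-q\,t_jt_k$; the factors $(\alpha q t_j;q)_\infty$ and $(-qt_j/\alpha;q)_\infty$ in the denominator of $\Sigma$ cancel the prefactor $\prod_j(-t_jq/\alpha,\,t_j\alpha q;q)_\infty$, the remaining $(q,-q/\alpha^{2};q)_\infty$ cancel against $(q,-q/\alpha^{2};q)_\infty$, and $(1+\alpha^{2})(-\alpha^{2}q;q)_\infty/(-\alpha^{2};q)_\infty=1$; what survives is exactly $(-qt_1t_2,-qt_1t_3,-qt_2t_3;q)_\infty$, manifestly independent of $\alpha$.

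The main obstacle is the bookkeeping in the first step — pinning down the exponents of $q$ and $\alpha$ precisely when the evaluated integrand is multiplied by $m_n(\alpha)$ — and the correct organization of the $e\to\infty$ degeneration of Bailey's ${}_{6}\psi_{6}$ (including the convergence check that makes the limit rigorous). As a consistency check, and an alternative route, this identity is the discrete reincarnation of the Askey $q$-beta integral \eqref{eqAsketqbeta} with $t_4=0$ and $t_j\mapsto qt_j$ (which gives $\int_{\mathbb R}w_A(x)\prod_{j=1}^{3}(-qt_jz,qt_j/z;q)_\infty\,dx=\prod_{1\le j<k\le3}(-qt_jt_k;q)_\infty$): since $d\mu_\alpha$ and $w_A(x)\,dx$ are both orthogonality measures of total mass $1$ for the $q^{-1}$-Hermite polynomials $h_k(x\mid q)$, while $(-qt_jz,qt_j/z;q)_\infty=\sum_{k\ge0}\frac{q^{\binom k2}}{(q;q)_k}(qt_j)^{k}h_k(x\mid q)$, the two measures are forced to return the same value.
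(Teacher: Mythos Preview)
Your proof is correct. The paper itself does not supply a proof of this lemma; it simply records that the identity is a special case of Theorem~3.5 in \cite{Ism2020}. Your direct evaluation via the $e\to\infty$ confluence of Bailey's ${}_{6}\psi_{6}$ is precisely the mechanism the paper alludes to in the paragraph following \eqref{eqortmua} (``The total mass of $\mu$ is evaluated using the ${}_6\psi_6$ summation theorem as indicated in \cite{Ism:Mas}''), so you have reconstructed the intended argument in full detail; the bookkeeping of the $q$- and $\alpha$-exponents and the identification $a=-\alpha^{2}$, $b_j=-\alpha/t_j$ are all carried out correctly. Your closing ``consistency check'' via the $q^{-1}$-Hermite generating function and the equality of moments for any two solutions of the moment problem is a pleasant alternative, though to make it a standalone proof one would need to justify the interchange of the bilateral sum with the triple $h_k$-expansion, which is not entirely immediate in the indeterminate case.
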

 This is a special case of Theorem 3.5 in \cite{Ism2020}
 
 \begin{thm}\label{thmA} The polynomials $\{V_n(x, \mathbf{t})|q\}$ satisfy the orthogonality relation 
\begin{equation}
\label{orthvmoment}
\begin{aligned}
&
\int_\R V_n(x;\,\mathbf{t}|q)V_m(x;\,\mathbf{t}|q)(-qt_1z,qt_1/z,-qt_2z,qt_2/z,-qt_3z,qt_3/z;q)_\infty d\mu_{\alpha}(x)\\
&=\frac{(q,-q^2/t_1t_3;q)_n}{(-q^2/t_1t_2,-q^2/t_2t_3;q)_n}(-qt_1t_2,-qt_1t_3,-qt_2t_3;q)_\infty\left(\frac{t_1^2}{q^3}\right)^n\delta_{m,n}
\end{aligned}
\end{equation}
 where $d\mu_{\alpha}(x)$ is defined by \eqref{mass}.
\end{thm}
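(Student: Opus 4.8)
The plan is to derive the orthogonality relation \eqref{orthvmoment} as a limiting case of the discrete orthogonality \eqref{eqorR} for the finite family $\{p_n(x,\mathbf{t})\}$, exactly parallel to how \eqref{eq:22} was obtained from \eqref{eq:finite-orthogonality}. Recall that
\[
V_{n}(x;\mathbf{t}|q)=\lim_{t_{4}\to0}\frac{p_{n}(x,\,\mathbf{t})}{\left(-q^{2}/t_{1}t_{4},-q^{2}/t_{1}t_{2},-q^{2}/t_{2}t_{3};q\right)_{n}},
\]
so the idea is to write \eqref{eqorR} with the particular measure $\mu$ of \eqref{eqortmua}, substitute $t_4\to 0$, and track the powers of $t_4$ on both sides. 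On the left, the factor $(-q^2/t_1t_4;q)_n (-q^2/t_1t_4;q)_m$ (one from each polynomial after the rescaling) must be cancelled against the behavior of the weight $\mu$ as $t_4\to 0$; and the remaining $t_4$-dependence of the weight collapses cleanly because $(-t_4z, t_4/z;q)_\infty \to 1$ and $(-t_1t_4/q, -t_2t_4/q, -t_3t_4/q;q)_\infty \to 1$ and $(t_1t_2t_3t_4/q^3;q)_\infty\to 1$ in \eqref{eqortmua}. On the right-hand side of \eqref{eqorR}, the Pochhammer products $\prod_{1\le j<k\le 4}(-q^2/t_jt_k;q)_n$ and $(q^4/t_1t_2t_3t_4;q)_n$ each contribute a power of $t_4$ that I must extract using the identity $(a q^{-n};q)_n = (q/a;q)_n(-a)^n q^{-\binom{n+1}{2}}$ recorded in the introduction (or simply the leading-term behavior $(c/t_4;q)_n \sim (c/t_4)^n q^{\binom n 2}(-1)^n$ as $t_4\to 0$). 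Collecting all these powers should reproduce the factor $(t_1^2/q^3)^n (q,-q^2/t_1t_3;q)_n/(-q^2/t_1t_2,-q^2/t_2t_3;q)_n$ and the constant $(-qt_1t_2,-qt_1t_3,-qt_2t_3;q)_\infty$, matching \eqref{eq:22} for the continuous case and \eqref{orthvmoment} for the discrete measure $\mu_\alpha$.

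The key steps, in order, are as follows. First, I start from \eqref{eqorR} but with $\mu_\alpha$ in place of $w_A$ (this is legitimate by the remark following \eqref{eqorR}, which says the proof is identical once the total mass is known), so the analogue of \eqref{eqorR} holds for the measure $(t_1t_2t_3t_4/q^3;q)_\infty \prod_{j=1}^4(-t_jz,t_j/z;q)_\infty / \prod_{1\le j<k\le 4}(-t_jt_k/q;q)_\infty \, d\mu_\alpha(x)$. Second, I replace $t_j$ by $q t_j$ for $j=1,2,3$ and let $t_4 \to 0$; under this substitution the weight in the integrand becomes $(-qt_1z, qt_1/z, -qt_2z, qt_2/z, -qt_3z, qt_3/z;q)_\infty\, d\mu_\alpha(x)$, which is precisely the weight in \eqref{orthvmoment}, and the polynomials $p_n$ become $V_n$ after dividing by the stated normalizing Pochhammer product. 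Third, I compute the $t_4\to 0$ asymptotics of the right-hand side of \eqref{eqorR}; this is the bookkeeping step where Lemma \ref{totmass} supplies the correct limiting total mass $(-qt_1t_2,-qt_1t_3,-qt_2t_3;q)_\infty$, and the powers of $t_4$ from the $t_4$-dependent Pochhammers cancel against $(-q^2/t_1t_4;q)_n^2$ divided out on the left. Finally, I simplify to land on \eqref{orthvmoment}.

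Alternatively — and this may be the cleaner write-up — I can bypass the finite family entirely and argue directly: the lowering operator \eqref{dqvn} shows $\D$ maps $V_n(\cdot;\mathbf{t})$ to a multiple of $V_{n-1}(\cdot;q^{-1/2}\mathbf{t})$, and the raising operator \eqref{rasP} combined with Lemma \ref{interchange} (integration by parts for $\D$) gives the standard Sturm--Liouville argument that $\int V_n V_m\, w = 0$ for $m\ne n$ against the weight $w(x,\mathbf{t})$; but since we want orthogonality with respect to the \emph{discrete} measure $(-qt_1z,\dots;q)_\infty d\mu_\alpha$, I would instead prove orthogonality directly from the series expansion of $V_n$ in the basis $u_k(x;t_1)=(-t_1q^{-k}z, t_1q^{-k}/z;q)_k$, reducing the moment computation $\int u_k(x;t_1)u_m(x;t_2)(-qt_1z,\dots;q)_\infty d\mu_\alpha$ to an application of Lemma \ref{totmass} with shifted parameters (just as $I_{m,k}$ reduced to $I(\cdot)$ in the proof of Theorem \ref{thm:finite}). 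Either route works; I would present the limiting argument from \eqref{eqorR} because the hard computational input — evaluating the total mass via the ${}_6\psi_6$ summation — has already been isolated in Lemma \ref{totmass}.

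The main obstacle will be the power-of-$t_4$ bookkeeping in step three: one must be careful that the factor $(-q^2/t_1t_4;q)_n$ appearing in the $j=1,k=4$ term of $\prod_{1\le j<k\le 4}(-q^2/t_jt_k;q)_n$ on the right, the two explicit normalizing factors $(-q^2/t_1t_4;q)_n$ divided out on the left (one per polynomial), and the factor $(q^4/t_1t_2t_3t_4;q)_n$ in the denominator all have their $t_4\to 0$ leading terms matched so that no spurious power of $t_4$ survives and the finite limit $(t_1^2/q^3)^n$ emerges. Getting the sign and the power of $q$ right (via $(c/t_4;q)_n \sim (-c/t_4)^n q^{\binom n2}$ as $t_4\to 0$, which is the $n$-shift identity from the introduction) is the only delicate point; everything else is a routine limit.
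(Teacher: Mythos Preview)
Your limiting approach from \eqref{eqorR} is valid and the bookkeeping you outline would go through, but the paper takes a genuinely different route—in fact, precisely the one you considered and set aside. You wrote that the Sturm--Liouville argument via Lemma~\ref{interchange} would give orthogonality against the continuous weight $w(x,\mathbf t)$ but not against the discrete measure $(-qt_1z,\dots;q)_\infty\,d\mu_\alpha$, and for that reason you fell back on the limit from the finite family. The paper shows that the Sturm--Liouville argument \emph{does} adapt to the discrete measure: using \eqref{askfp2} to rewrite $m_n(\alpha)$ in terms of $w_A(x_n(\alpha))$, it introduces the discrete pairing
\[
\langle f,g\rangle_\alpha=\sum_{n=-\infty}^{\infty}\breve f(z_n(\alpha))\,\breve g(z_n(\alpha))\,\tfrac{q^{-1/2}-q^{1/2}}{2}\bigl(z_n(\alpha)+1/z_n(\alpha)\bigr)
\]
and proves the shifted summation-by-parts identity $\langle \D f,g\rangle_\alpha=-\langle f,\D g\rangle_{\alpha q^{1/2}}$ by reindexing (since $z_{n+1}(\alpha)=q^{-1}z_n(\alpha)$). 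Feeding the $q$-Sturm--Liouville equation \eqref{raisvn} through this identity makes $\lambda_n$ times the left side of \eqref{orthvmoment} symmetric in $m,n$; strict monotonicity of $\lambda_n$ then forces the off-diagonal integrals to vanish. The diagonal value is obtained not from a limit of the right side of \eqref{eqorR} but from the three-term recurrence \eqref{eq:vn-recurrence-3} via the standard formula $\int p_n^2\,d\mu=(A_0/A_n)C_1\cdots C_n\int d\mu$, with $\int d\mu$ supplied by Lemma~\ref{totmass}.

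So both arguments are sound. Yours is more economical in that it recycles \eqref{eqorR} wholesale and needs no new identity; its cost is the $t_4\to0$ bookkeeping you flagged (and you should double-check that the reparametrization $t_j\mapsto qt_j$ you invoke to match the weight in \eqref{orthvmoment} does not simultaneously shift the parameters inside $V_n$, which would leave you proving the relation for $V_n(x;q\mathbf t)$ rather than $V_n(x;\mathbf t)$). The paper's argument buys a reusable discrete integration-by-parts for $\D$ against the $N$-extremal measures $\mu_\alpha$—an identity of independent interest—and separates the vanishing of the off-diagonal terms (operator-theoretic) from the evaluation of the norm (recurrence plus total mass).
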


\begin{proof}
From the discussion before the proof of Theorem ?? we conclude that 
 \bea
 \notag
 w_A(x_n(\a))= \frac{-\log q \; m_n(\a)}{z_n(\a)+ 1/z_n(\a)}. 
 \eea

Consider  the inner products
\begin{equation}
\langle f, g\rangle_{\alpha}=\sum_{n=-\infty}^{\infty} \breve{f}\left(z_{n}(\alpha)\right) \breve{g}\left(z_{n}(\alpha)\right)\left(q^{-1 / 2}-q^{1 / 2}\right)\left(z_{n}(\alpha)+1 / z_{n}(\alpha)\right) / 2.
\end{equation}
We find $z_{n+1}(\alpha)=q^{-1}z_n(\alpha)$. Therefore 	
\begin{align*}
\left\langle\mathcal{D}_{q} f, g\right\rangle_{\alpha} &=-\sum_{n=-\infty}^{\infty} \breve{g}\left(z_{n}(\alpha)\right)\left[\check{f}\left(q^{1 / 2} z_{n}(\alpha)\right)-\breve{f}\left(q^{-1 / 2} z_{n}(\alpha)\right)\right] \\ &=\sum_{n=-\infty}^{\infty} \breve{g}\left(z_{n}(\alpha)\right) \breve{f}\left(q^{-1 / 2} z_{n}(\alpha)\right)-\sum_{n=-\infty}^{\infty} \breve{g}\left(z_{n+1}(\alpha)\right) \breve{f}\left(q^{1 / 2} z_{n+1}(\alpha)\right) \\ &=\sum_{n=-\infty}^{\infty} \breve{g}\left(z_{n}(\alpha)\right) \breve{f}\left(z_{n}\left(\alpha q^{1 / 2}\right)\right)-\sum_{n=-\infty}^{\infty} \breve{g}\left(q^{-1 / 2} z_{n}\left(\alpha q^{1 / 2}\right)\right) \breve{f}\left(z_{n}\left(\alpha q^{1 / 2}\right)\right) \\
&=\sum_{n=-\infty}^{\infty} \check{f}\left(z_{n}\left(\alpha q^{1 / 2}\right)\right)\left[\breve{g}\left(q^{1 / 2} z_{n}\left(\alpha q^{1 / 2}\right)\right)-\breve{g}\left(q^{-1 / 2} z_{n}(\alpha)\right)\right]\\
&=-\left\langle f, \mathcal{D}_{q} g\right\rangle_{\alpha q^{1 / 2}}
\end{align*}
Let 
\bea
\notag 
\lambda_n(\mathbf{t})=\frac{-4q^{7}}{(1-q)^2t_1^2t_2^2t_3^2}  \frac{(1-q^{n})}{\prod_{1 \le j < k \le 3}(1+q^2/t_jt_k)}.
\eea
 We  apply  \eqref{raisvn} and   use \eqref{askfp2},  to  find that $\lambda_n$ times the left-side of \eqref{orthvmoment} is equal to 
\begin{equation}
\notag
	\begin{aligned}
	&\int_{\mathbb{R}} \frac{(-q t_1 z, q t_1 / z,-q t_2 z, q t_2 / z ,-q t_3 z, q t_3 / z ; q)_{\infty}}{w(x ; t_1, t_2, t_3,0)} V_{m}(x ; t_1, t_2, t_3)  \\
	& \times \mathcal{D}_{q}\left[w\left(x ; t_1 q^{-1 / 2}, t_2 q^{-1 / 2}, t_3 q^{-1 / 2}, 0 \right) \mathcal{D}_{q} V_{n}(x ;t_1, t_2, t_3)\right] d \mu_{\alpha}(x) \\
	=& \int_{\mathbb{R}} \frac{V_{m}(x ;t_1, t_2, t_3)}{2 w_{A}(x)} \mathcal{D}_{q}\left[w\left(x ; t_1 q^{-1 / 2}, t_2 q^{-1 / 2}, t_3 q^{-1 / 2},0\right) \mathcal{D}_{q} V_{n}(x ; t_1, t_2, t_3)\right] d \mu_{\alpha}(x) \\
	=& \sum_{j=-\infty}^{\infty} V_{m}\left(x_{j}(\alpha) ;  t_1, t_2, t_3\right) \frac{z_{j}(\alpha)+1 / z_{j}(\alpha)}{2} \\
	& \times \mathcal{D}_{q}\left[w\left(x ; t_1 q^{-1 / 2}, t_2 q^{-1 / 2}, t_3 q^{-1 / 2},0\right) \mathcal{D}_{q} V_{n}(x ; t_1, t_2, t_3)\right]_{x=x_{j}(\alpha)}.
	\end{aligned}
\end{equation}
Hence,
\begin{equation}
\notag
	\begin{aligned}
	&\lambda_n \int_{\mathbb{R}} V_{m}(x ; t_1 ,t_2,  t_3) V_{n}(x ; t_1 ,t_2,  t_3)(-q t_1 z, q t_1 / z,-q t_2 z, q t_2 / z ,-q t_3 z, q t_3 / z ; q)_{\infty} d \mu_{\alpha} \\
	=&\frac{1}{q^{1 / 2}-q^{-1 / 2}}\left\langle\mathcal{D}_{q}\left[w\left(x ; t_1 q^{-1 / 2}, t_2 q^{-1 / 2}, t_3 q^{-1 / 2},0\right) \mathcal{D}_{q} V_{n}(x ;t_1 ,t_2,  t_3)\right], V_{m}(x ;t_1 ,t_2,  t_3)\right\rangle_{\alpha} \\
=&-\frac{1}{q^{1 / 2}-q^{-1 / 2}}\left\langle w\left(x ; t_1 q^{-1 / 2}, t_2 q^{-1 / 2}, t_3 q^{-1 / 2},0\right) \mathcal{D}_{q} V_{n}(x ; t_1, t_2, t_3)\right], \mathcal{D}_{q} V_{m}(x ; t_1, t_2, t_3)\rangle_{\alpha q^{1 / 2}}.
	\end{aligned}
\end{equation}
Note that the above formula is symmetric in $m$ and $n$ and that 
$\lambda_n$ is strictly monotonous in n. Hence the integral 
\eqref{orthvmoment} will vanish if $m\neq n$.
$\int_\R d\mu(x)$ has been computed  in  Lemma~\ref{totmass}.
To calculate the $\int_\R p_n^2(x)d\mu(x)$, we apply the well-known 
 fact that if a sequence of orthogonal polynomial have the following the recurrence relations 
$$p_{n+1}(x)=[A_nx+B_n]p_n(x)-C_np_{n-1}(x),$$
then their orthogonality relation is 
$$\int_{\mathbb{R}} p_{n}(x) p_{m}(x) d \mu(x)=\frac{\delta_{m, n} A_{0}}{A_{n}} C_{1} \cdots C_{n} \int_{\mathbb{R}} d \mu(x).$$
We apply this fact and use the recurrence relation \eqref{eq:vn-recurrence-3}, to establish \eqref{orthvmoment} up to the evualtion of the integral $\int_\R d\mu(x)$. This is the complete proof.
\end{proof}

 For completeness we record the three term recurrence relation of the polynomials $\{V_{n}(x;\,\mathbf{t}\vert q)\}$ which follows as 
 the  limiting case $t_4\to 0+$ of \eqref{eq:finite-recurrence-1}. 
The result is
\begin{equation}
2xV_{n}(x;\,\mathbf{t}\vert q)=a_{n}V_{n+1}(x;\,\mathbf{t}\vert q)+b_{n}V_{n}(x;\,\mathbf{t}\vert q)+c_{n}V_{n-1}(x;\,\mathbf{t}\vert q),\quad n\ge0,\label{eq:vn-recurrence-3}
\end{equation}
with 
\begin{equation}
x=\frac{z-1/z}{2},\quad V_{-1}(x;\,\mathbf{t}\vert q)=0,\quad V_{0}(x;\,\mathbf{t}\vert q)=1,\label{eq:vn-recurrence-4}
\end{equation}
where, 
\bea
\label{eq:vn-recurrence-5}
\bg
a_{n}  =-\frac{t_{2}t_{3}}{q^{2n+2}}\left(1+q^{2+n}/t_{1}t_{2}\right)\left(1+q^{2+n}/t_{2}t_{3}\right),\\
b_{n}  =\frac{t_1+t_2+t_3}{q^{n+1}}+\frac{t_1t_2t_3}{q^{2n+3}}(1+q-q^{n+1})
, \\
c_{n}  =-\frac{t_{1}^{2}t_{2}t_{3}}{q^{2n+3}}\left(1-q^{n}\right)\left(1+q^{n+1}/t_{1}t_{3}\right).\label{eq:vn-recurrence-7}
\eg
\eea
The $V_n$'s are not symmetric but the following renormalization is symmetric in the parameters $t_1, t_2, t_3$, 
The symmtric form is 
\bea
\bg
\label{eq:vnt-definition}
\tilde{V}_{n}(x,\,\mathbf{t}|q):=\tilde{V}_{n}(x;\,t_{1},\,t_{2},\,t_{3}|q)\\
= \frac{1}{\left(-q^{2}/t_{2}t_{3};q\right)_{n}}\;
{}_{3}\phi_{2}\left(\left. \begin{matrix}
q^{-n},-q/t_{1}z,\,qz/t_{1}\\
-q^{2}/t_{1}t_{3},-q^{2}/t_{1}t_{2}
\end{matrix} \right| q,\,-\frac{q^{n+2}}{t_{2}t_{3}}\right).
\eg
\eea

We now wish to study the connection coefficients for our polynomials.. 
One can use the Rodrigues-type  formula to derive  the desired connection  relation by following  the proof of the corresponding result for the \aw polynomials given in \cite{Ism:Zha1}, see also \cite{Ismbook}. 
\begin{thm}
We have 
\begin{equation}
V_{n}(x,\,\mathbf{s})=\sum_{k=0}^{n}e_{k,n}\left(\mathbf{s},\,\mathbf{t}\right)V_{k}(x,\,\mathbf{t}),\label{eq:infinite-connection-5}
\end{equation}
where 
\begin{equation}
\mathbf{t}=t_{1},\ t_{2},\ t_{3}\label{eq:infinite-connection-6}
\end{equation}
and
\begin{equation}
\mathbf{s}=s_{1},\ s_{2},\ s_{3}\label{eq:infinite-connection-7}, \quad \quad s_3=t_3
\end{equation}
with
\begin{align}
& e_{k,n}\left(\mathbf{s},\,\mathbf{t}\right)=\frac{\left(-q^{2}/s_{1}t_{3},\,q;q\right)_{n}\left(-q^2/t_2t_3,-q^2/t_1t_2;q\right)_{k}}{\left(-q^2/s_1s_2;q\right)_n\left(-q^{2}/s_{1}t_{3},-q^{2}/s_{2}t_{3},\,q;q\right)_{k}\left(q;q\right)_{n-k}}q^{k-n}s_1^n\left(\frac{t_2}{s_1s_2}\right)^k\label{eq:infinite-connection-4}\\
& \times{}_{3}\phi_{2}\begin{pmatrix}\begin{array}{c}
q^{k-n},\,-q^{k+2}/t_{1}t_{3},-q^{k+2}/t_{2}t_{3}\\
-q^{k+2}/s_{1}t_{3},-q^{k+2}/s_{2}t_{3},
\end{array} & \bigg|q,\  \frac{q^{n-k-1}t_1t_2}{s_1s_2}\end{pmatrix}.\nonumber 
\end{align}
\end{thm}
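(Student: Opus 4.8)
The plan is to determine $e_{k,n}(\mathbf{s},\mathbf{t})$ by formal orthogonality, reducing the computation to a single $q$-beta integral by means of the Rodrigues formula \eqref{rodvn}, the lowering operator \eqref{dqvn}, and the divided-difference integration-by-parts Lemma~\ref{interchange}; this is the route used for the Askey--Wilson connection problem in \cite{Ism:Zha1}. Two preliminary remarks. First, the hypothesis $s_3=t_3$ in \eqref{eq:infinite-connection-7} costs nothing, since the renormalization $\tilde V_n$ of \eqref{eq:vnt-definition} is symmetric in $t_1,t_2,t_3$, so after relabelling one may always take the free parameter to be the third. Second, since both sides of \eqref{eq:infinite-connection-4} are rational in the parameters times a terminating $_3\phi_2$, it suffices to prove the identity for $t_1,t_2,t_3,s_1,s_2>0$ small enough that all the weights appearing below are integrable with the needed finite moments (cf.\ the discussion after Theorem~\ref{thm:finite}); the general case follows by analytic continuation.

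\textbf{Step 1: reduction.} Writing $h_k(\mathbf t):=\int_\R V_k(x,\mathbf t)^2 w(x,\mathbf t)\,dx$, which is given by \eqref{eq:22}, orthogonality gives $e_{k,n}(\mathbf s,\mathbf t)=h_k(\mathbf t)^{-1}\int_\R V_n(x,\mathbf s)V_k(x,\mathbf t)w(x,\mathbf t)\,dx$. By the Rodrigues formula \eqref{rodvn}, $w(x,\mathbf t)V_k(x,\mathbf t|q)$ is an explicit constant times $\D^{k}[w(x,q^{-k/2}\mathbf t)]$. Applying Lemma~\ref{interchange} $k$ times moves $\D^{k}$ onto $V_n(\cdot,\mathbf s)$ — the boundary terms vanish by the decay of $w(x,q^{-j/2}\mathbf t)$, $0\le j\le k$ — and iterating \eqref{dqvn} gives $\D^{k}V_n(x,\mathbf s)$ as an explicit constant times $V_{n-k}(x,q^{-k/2}\mathbf s)$. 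Because $s_3=t_3$, the third parameters of $q^{-k/2}\mathbf s$ and $q^{-k/2}\mathbf t$ still agree, and what survives is
\[
e_{k,n}(\mathbf s,\mathbf t)=\frac{(\text{explicit constant})}{h_k(\mathbf t)}\int_\R V_{n-k}(x,q^{-k/2}\mathbf s)\,w(x,q^{-k/2}\mathbf t)\,dx=\frac{(\text{explicit constant})}{h_k(\mathbf t)}\,e_{0,n-k}\!\bigl(q^{-k/2}\mathbf s,\,q^{-k/2}\mathbf t\bigr),
\]
the last equality because $V_0=1$ and $\int_\R w(x,q^{-k/2}\mathbf t)\,dx=1$, so the integral is precisely the constant term of $V_{n-k}(\cdot,q^{-k/2}\mathbf s)$ in the $V_j(\cdot,q^{-k/2}\mathbf t)$-basis. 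The displayed constant is the product of the Rodrigues constant of \eqref{rodvn}, the sign $(-1)^k$, and the iterated-lowering constant from \eqref{dqvn}, all of which are finite products of $q$-shifted factorials and powers of $q$ that one simplifies using $\prod_{j=0}^{k-1}(1-q^{n-j})=(q^{n-k+1};q)_k$, $\prod_{j=0}^{k-1}(1+q^{j+2}/t_at_b)=(-q^2/t_at_b;q)_k$, and the like.

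\textbf{Step 2: the base case.} It remains to evaluate $e_{0,m}(\mathbf s,\mathbf t)=\int_\R V_m(x,\mathbf s)w(x,\mathbf t)\,dx$. Expand $V_m(x,\mathbf s)$ by its terminating $_3\phi_2$ series \eqref{eq:vn-definition}; the $l$-th summand carries the $z$-dependent factor $(-q/s_1z,qz/s_1;q)_l$, so everything comes down to the $q$-beta integrals $\int_\R (-q/s_1z,qz/s_1;q)_l\,w(x,\mathbf t)\,dx$. Each of these is evaluated from the Askey integral \eqref{eqAsketqbeta} — the analogue of the key computation performed for the Askey--Wilson weight in \cite{Ism:Zha1} — and equals a ratio of $q$-shifted factorials in the products $s_1t_a$. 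Substituting back, interchanging the two finite sums, and applying a Sears-type $_3\phi_2$ transformation \cite{Gas:Rah} collapses the answer to the $_3\phi_2$ of \eqref{eq:infinite-connection-4} specialized to $k=0$, with argument $q^{m-1}t_1t_2/s_1s_2$. Inserting this base case with $(\mathbf s,\mathbf t)$ replaced by $(q^{-k/2}\mathbf s,q^{-k/2}\mathbf t)$ into Step 1 and simplifying — the scaling sends each $-q^2/\bullet$ to $-q^{k+2}/\bullet$ and leaves $q^{n-k-1}t_1t_2/s_1s_2$ unchanged, which is exactly the shape of \eqref{eq:infinite-connection-4} — yields the stated formula.

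\textbf{The main obstacle.} The only step that is not routine bookkeeping is the base case: evaluating $\int_\R (-q/s_1z,qz/s_1;q)_l\,w(x,\mathbf t)\,dx$ from \eqref{eqAsketqbeta} and then recognising and resumming the resulting double series as a single $_3\phi_2$, which requires a nontrivial $q$-series (Sears) transformation and in which any slip in the normalization of the $q$-beta integral propagates directly into the prefactor of \eqref{eq:infinite-connection-4}. The remaining technical points — verifying the hypotheses of Lemma~\ref{interchange} at each of the $k$ integrations by parts, and tracking the powers of $q$ through \eqref{rodvn}--\eqref{dqvn} — are straightforward. One could alternatively try to deduce \eqref{eq:infinite-connection-4} by letting $t_4\to 0$ in \eqref{eq:finite-connection-4} specialized to $s_3=t_3$; but in that limit the leading terms cancel and the surviving $_5\phi_4$ degenerates, so one would have to extract a delicate subleading contribution, which the Rodrigues route above avoids.
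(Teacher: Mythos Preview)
Your proposal is correct and follows precisely the route the paper indicates: the paper does not give a detailed proof but simply states that ``one can use the Rodrigues-type formula to derive the desired connection relation by following the proof of the corresponding result for the Askey--Wilson polynomials given in \cite{Ism:Zha1}'', and that is exactly the reduction-by-Rodrigues-plus-integration-by-parts scheme you carry out in Steps~1--2. Your treatment is in fact more explicit than the paper's, and your caveat about the alternative $t_4\to 0$ limit of \eqref{eq:finite-connection-4} is apt.
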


For completeness we record a generating function for the polynomials $\{V_n\}$.  We let  $t_{4}\to0$ in the generating 
function  \eqref{eqgfnewpol} and conclude that

\bea
\bg
{}_{2}\phi_{1}\begin{pmatrix}\begin{array}{c}
qz/t_{1},\:qz/t_{3}\\
-q^{2}/t_{1}t_{3}
\end{array} & \bigg|q,\,-\frac{t}{z}\end{pmatrix}\frac{\left(-t/z;q\right)_{\infty}}{\left(tt_{3}/q;q\right)_{\infty}} \qquad  \\
\qquad\qquad
 =\sum_{n=0}^{\infty}\frac{\left(-q^{2}/t_{2}t_{3};q\right)_{n}}{\left(q;q\right)_{n}}V_{n}(x;\,\mathbf{t}\vert q)\left(\frac{tt_{3}}{t_{1}}\right)^{n},
 \eg
 \label{eq:vn-gf-1}
\eea
Using the symmetry of $\tilde{V}_n$, in its parameters, see 
\eqref{eq:vnt-definition} we can write additional equivalent 
generating functions. 
 
 We conclude this section with a theorem which allows us to generate many additional weight functions for the polynomials 
 $\{V_n(x; {\bf t})\}$.    

The product rule for $\D$ is 
\begin{eqnarray}\label{dfg}
(\D fg)(x)= (\A f)(x) (\D g)(x)+(\A g)(x) (\D f)(x).
\end{eqnarray}

As was pointed out in \cite{Ism2020} and under the assumptions in 
Lemma~\ref{interchange}  the eigenfunctions of 
\begin{eqnarray}\label{q-Strum}
\frac{1}{v(x)}\D[v(x)\D y]= \lambda y.
\end{eqnarray}
corresponding to distinct eigenvalues are orthogonal with respect to 
$v$ on $\R$, if $v(x) > 0$ on $\R$.  The proof also uses \eqref{dfg}. 
 
\begin{thm}We assume that $f(x;t_1,t_2,t_3)$ is a positive function and its moments $\int_\R x^n f(x;t_1,t_2,t_3)dx $ exist for all non-negative integers n. If $f$ satisfies the following equations
	\begin{equation}
	\begin{aligned}
		&
	\frac{\D f(x;q^{-1/2}{\bf t})}{f(x; {\bf t})}\\
	&
	=\frac{2q(2x-t_1/q-t_2/q-t_3/q-t_1t_2t_3/q^3)}{(q-1)\prod_{1 \le j < k \le 3}(1+t_jt_k/q^2)} \; 
	\frac{\A f(x;q^{-1/2}{\bf t})}{f(x;\, {\bf t})}\\ 
	&
	=\frac{q^{1/2}}{\prod_{1 \le j < k \le 3}(1+t_jt_k/q^2)}\\
	&
	\times [2x^2+1+x(t_1t_2t_3/q^3-t_1/q-t_2/q-t_3/q)+(t_1t_2+t_1t_3+t_2t_3)/q^2].
	\end{aligned}
	\end{equation}
	then $f$ is weight function for polynomials $\{V_n(x; {\bf t} \mid q)\}_{n=0}^\infty$.
\end{thm}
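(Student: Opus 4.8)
The strategy is to turn the two divided-difference equations for $f$ into the statement that the weighted $q$-Sturm--Liouville operator built from $f$ is the \emph{same} operator as the one on the left of \eqref{raisvn}; the polynomials $V_n$ are then its eigenfunctions for pairwise distinct eigenvalues, and self-adjointness of that operator relative to $f(x;\mathbf{t})\,dx$ --- legitimate because $f>0$ --- forces the orthogonality. Concretely, set $\mathcal{L}y:=\dfrac{1}{f(x;\mathbf{t})}\D[f(x;q^{-1/2}\mathbf{t})\D y]$, where $q^{-1/2}\mathbf{t}=(q^{-1/2}t_1,q^{-1/2}t_2,q^{-1/2}t_3)$. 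Applying the product rule \eqref{dfg} to the two factors $f(x;q^{-1/2}\mathbf{t})$ and $\D y$ gives
\[
\mathcal{L}y=\frac{\A f(x;q^{-1/2}\mathbf{t})}{f(x;\mathbf{t})}\,\D^{2}y+\frac{\D f(x;q^{-1/2}\mathbf{t})}{f(x;\mathbf{t})}\,\A\D y ,
\]
so $\mathcal{L}$ depends on $f$ only through the two ratios $\A f(x;q^{-1/2}\mathbf{t})/f(x;\mathbf{t})$ and $\D f(x;q^{-1/2}\mathbf{t})/f(x;\mathbf{t})$. The hypotheses state that these equal precisely the corresponding ratios for the weight $w(x,t_1,t_2,t_3,0)$ of \eqref{eqneww}: the right-hand sides displayed in the theorem are exactly the $t_4\to0$ limits ($\sigma_4\to0$, $\sigma_1\to t_1+t_2+t_3$, $\sigma_2\to t_1t_2+t_1t_3+t_2t_3$, $\sigma_3\to t_1t_2t_3$, and $\prod_{1\le j<k\le4}(1+t_jt_k/q^{2})\to\prod_{1\le j<k\le3}(1+t_jt_k/q^{2})$) of the two divided-difference equations satisfied by $w$ in the finite-family case. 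Hence $\mathcal{L}$ is the operator on the left of \eqref{raisvn}, and that equation yields $\mathcal{L}V_n=\lambda_n(\mathbf{t})V_n$ with $\lambda_n(\mathbf{t})=\dfrac{-4q^{7}}{(1-q)^{2}t_1^{2}t_2^{2}t_3^{2}}\cdot\dfrac{1-q^{n}}{\prod_{1\le j<k\le3}(1+q^{2}/t_jt_k)}$; since $n\mapsto1-q^{n}$ is strictly monotone, the $\lambda_n(\mathbf{t})$ are pairwise distinct.

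It remains to establish self-adjointness and draw the conclusion. For polynomials $P,Q$, Lemma~\ref{interchange} applied to the pair $(f(x;q^{-1/2}\mathbf{t})\D P,\ Q)$ gives
\[
\int_{\R}(\mathcal{L}P)(x)\,Q(x)\,f(x;\mathbf{t})\,dx=\int_{\R}\D[f(x;q^{-1/2}\mathbf{t})\D P](x)\,Q(x)\,dx=-\int_{\R}f(x;q^{-1/2}\mathbf{t})\,(\D P)(x)\,(\D Q)(x)\,dx ,
\]
which is symmetric in $P$ and $Q$; this is Theorem~21.6.3 in \cite{Ismbook} for the present weight, the same mechanism already used for the finite family and recalled just before the statement (eigenfunctions of a positive-weight $q$-Sturm--Liouville operator belonging to distinct eigenvalues are orthogonal). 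Taking $P=V_n$, $Q=V_m$ we get $(\lambda_n(\mathbf{t})-\lambda_m(\mathbf{t}))\int_{\R}V_nV_m\,f(x;\mathbf{t})\,dx=0$, so $\int_{\R}V_nV_m\,f(x;\mathbf{t})\,dx=0$ for $m\ne n$. Since $f>0$ on $\R$ and $V_n$ has exact degree $n$ --- the recurrence \eqref{eq:vn-recurrence-3} has $a_n=-t_2t_3q^{-2n-2}(1+q^{n+2}/t_1t_2)(1+q^{n+2}/t_2t_3)\ne0$ --- we also have $\int_{\R}V_n^{2}\,f(x;\mathbf{t})\,dx>0$; with the moment hypothesis ensuring that every integral above is finite, $f$ is a genuine positive weight function for $\{V_n(x;\mathbf{t}\mid q)\}_{n=0}^{\infty}$, which is the assertion.

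The algebra above is routine once the product-rule expansion of $\mathcal{L}$ is written down; the genuine obstacle is analytic. One must justify the integration by parts, i.e.\ verify that the integrability and vanishing-boundary hypotheses of Lemma~\ref{interchange} hold, namely that $\int_0^{\infty}\breve h_1(z)\,\breve h_2(q^{\pm1/2}z)\,dz/z$ exists for the products $h_1,h_2$ formed from $f(x;\mathbf{t})$, $f(x;q^{-1/2}\mathbf{t})$ and polynomials. Only the existence of the moments of $f$ is assumed, so one should either extract the required decay of $f$ at $\pm\infty$ from the divided-difference equations themselves --- which express $f(x;q^{-1/2}\mathbf{t})$ at the points $q^{\pm1/2}z$ through $f(x;\mathbf{t})$ at $z$ times polynomials in $x$ --- or record the mild regularity condition on $f$ that is implicit in invoking Theorem~21.6.3.
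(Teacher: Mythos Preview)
Your argument is correct and is exactly the approach the paper indicates: expand $\mathcal{L}$ via the product rule \eqref{dfg}, use the two hypothesized ratios to identify $\mathcal{L}$ with the operator in \eqref{raisvn}, and then invoke Lemma~\ref{interchange} for self-adjointness, which is precisely what the paper means by ``The proof follows from Lemma~\ref{interchange} and \eqref{dfg}.'' Your final paragraph flagging the implicit integrability assumption behind Lemma~\ref{interchange} is a fair point that the paper leaves unaddressed.
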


The proof follows from Lemma \ref{interchange} and  \eqref{dfg}. The details are identical to the the proof of the corresponding result in  \cite{Ism2020}.

\section{Pointwise and Plancherel-Rotach Asymptotics} 
A theorem  of Ismail and Li \cite{Ism:Li}, which is also stated as 
Theorem 7.2.7 in  
 \cite{Ismbook}, implies that the largest zero of $V_n$ is $\O(q^{-2n})$.

Since
\begin{align*}
& \frac{q^{n}V_{n}(x;\,t_{1},\,t_{2},\,t_{3}|q)}{t_{1}^{n}}\frac{\left(-q^{2}/t_{2}t_{3};q\right)_{n}}{\left(-q^{2}/t_{1}t_{3};q\right)_{n}}=
{}_{3}\phi_{2} \left(\left. \begin{array}{c}
q^{-n},-q/t_{1}z,\,qz/t_{1}\\
-q^{2}/t_{1}t_{3},-q^{2}/t_{1}t_{2}
\end{array} \right| q,\,-\frac{q^{n+2}}{t_{2}t_{3}}\right)\\
& =\sum_{k=0}^{n}\frac{\left(-q/t_{1}z,\,qz/t_{1};q\right)_{k}}{\left(q,-q^{2}/t_{1}t_{3},-q^{2}/t_{1}t_{2};q\right)_{k}}\left(\frac{q^{2}}{t_{2}t_{3}}\right)^{k}\left(q^{-n};q\right)_{k}\left(-q^{n}\right)^{k}. 
\end{align*}
Write $(q^{-n};q)_k$ as $ (-1)^kq^{\binom{k}{2}} (q;q)_n/(q;q)_{n-k}$, 
then apply Tannery's theorem (the discrete analogue of the Lebesgue Dominated Convergence Theorem) to  get,
\begin{equation}
\begin{aligned}
&
\lim_{n\to\infty}\frac{q^{n}V_{n}(x;\,t_{1},\,t_{2},\,t_{3}|q)}{t_{1}^{n}}
\qquad      \qquad         \\
&
=\frac{\left(-q^{2}/t_{1}t_{3};q\right)_{\infty}}
{\left(-q^{2}/t_{2}t_{3};q\right)_{\infty}}\; {}_{2}\phi_{2} 
\left(\left.
\begin{array}{c}
-q/t_{1}z,\,qz/t_{1}\\
-q^{2}/t_{1}t_{3},-q^{2}/t_{1}t_{2}
\end{array} \right|  q,\,-\frac{q^{2}}{t_{2}t_{3}}\right).
\end{aligned}
\label{eq:asymptotics-2}
\end{equation}
The limit is uniform on compact subsets of the complex plane. 
In particular, it implies that the entire function,

\[
_{2}\phi_{2}\left(\left. \begin{array}{c}
-ae^{\xi},\,ae^{-\xi}\\
-ab,-ac
\end{array}    \right|q,\,-bc\right),\quad a,b,c>0
\]
has only real zeros. 

In order to develop  the Plancherel--Rotach asymptotics we set 
\bea
z = q^{-cn}/s, \quad x = (z-1/z)/2.
\eea 

Write the ${}_3\phi_2$ in the definition of $V_n$ in 
 \eqref{eq:vn-definition} as $\sum_{k=0}^n$. We shall use the identities
\bea
(q^{-n};q)_n = \frac{(q;q)_n}{(q;q)_{n-k}} 
(-1)^k q^{\binom{k}{2} -nk}, 
\qquad (A;q)_k = (q^{1-k}/A;q)_k A^kq^{\binom{k}{2}}.
\eea 
 
\begin{thm} 
Let $z_n(s)=q^{-cn}/s$, then $x_n(s)=(q^{-cn}/s-q^{cn}s)/2$. 

\noindent  $\textup{(a)} $ If $c=2$, the polynomial $V_n$ 
has the  soft edge limiting behavior   
\begin{eqnarray}
\bg
\lim_{n\to\infty} (-st_2t_3)^nq^{n^2-n} V_n(x_n(s);t_1,t_2,t_3)\\
=\frac{1}{(-q^2/t_1t_2,-q^2/t_2t_3;q)_\infty}\left(\sum_{k=0}^{\infty}\frac{(-1)^kq^{k^2-2k}(st_1t_2t_3)^k}{(q;q)_k}\right) = \frac{A_q(st_1t_2t_3q^{-2})}{(-q^2/t_1t_2,-q^2/t_2t_3;q)_\infty}.    
\eg
\end{eqnarray}
 
\noindent  $\textup{(b)}$ If  $c>2$, then $V_n$ has the 
asymptotic property  
\begin{eqnarray}\label{clarge2}
\bg
\quad 		\lim_{n\to\infty}q^{(c-1)n^2-n}(-st_2t_3)^n 
V_n(x_n(s);t_1,t_2,t_3 \mid q)  
= \frac{1}{(-q^2/t_1t_2,-q^2/t_2t_3;q)_\infty}.
		\eg
	\end{eqnarray}
	
\noindent  $\textup{(c)}$ If $1\le c<2,$ we set $m=\lfloor{\frac{cn}{2}}\rfloor$ and $r=\{\frac{cn}{2}\} = \frac{cn}{2}-m$.  For fixed $r$ and as $n \to \infty$ we have 
	\begin{eqnarray}
\bg
    \lim_{n\to\infty} (-st_2t_3)^mq^{n+c^2n^2/4-2m}t_1^{m-n} V_n(x_n(s);t_1,t_2,t_3)\\=\frac{q^{r^2}(q^2,\frac{q^{3+2r}}{st_1t_2t_3},q^{-(1+2r)}st_1t_2t_3;q^2)_\infty}{(q,-q^2/t_1t_2,-q^2/t_2t_3;q)_\infty}.
    \eg
\end{eqnarray}	
\end{thm}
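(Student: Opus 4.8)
The plan is to analyze the sum $\sum_{k=0}^n$ obtained by expanding the ${}_3\phi_2$ in the definition \eqref{eq:vn-definition} of $V_n$, after substituting $z=z_n(s)=q^{-cn}/s$ and rewriting the terminating factors via the two identities displayed just before the theorem, namely $(q^{-n};q)_n=\frac{(q;q)_n}{(q;q)_{n-k}}(-1)^kq^{\binom{k}{2}-nk}$ and $(A;q)_k=(q^{1-k}/A;q)_k A^k q^{\binom{k}{2}}$. After these substitutions the general term of the sum becomes, up to the prefactor $(t_1/q)^n(-q^2/t_1t_3;q)_n/(-q^2/t_2t_3;q)_n$, an explicit product of $q$-powers in $n$, $k$, $c$, and $s$ times bounded $q$-Pochhammer factors. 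The key computation is to extract the exponent of $q$ in the $k$-th term as a quadratic function of $k$ (with coefficients depending on $n$ and $c$) and to determine, for each regime of $c$, which value(s) of $k$ dominate as $n\to\infty$.

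For part (b), $c>2$: I expect the exponent to be minimized (largest term) at $k=0$, with a uniform geometric-type bound on the tail, so only the $k=0$ term survives after multiplying by the stated normalization $q^{(c-1)n^2-n}(-st_2t_3)^n$; Tannery's theorem then gives the limit $1/(-q^2/t_1t_2,-q^2/t_2t_3;q)_\infty$. For part (a), $c=2$: the quadratic-in-$k$ exponent should become, after the normalization $(-st_2t_3)^nq^{n^2-n}$, asymptotically independent of $n$ in each fixed term, converging termwise to $\frac{(-1)^kq^{k^2-2k}(st_1t_2t_3)^k}{(q;q)_k}$; summing over $k$ reproduces the Ramanujan function $A_q(st_1t_2t_3q^{-2})$ by its defining series \eqref{eqAq}, and the prefactor $(-q^2/t_1t_3;q)_n/(-q^2/t_2t_3;q)_n\to$ the appropriate infinite products, with $(t_1/q)^n$ absorbed into the normalization. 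Again Tannery's theorem (a dominating term estimate uniform in $n$) justifies the interchange. For part (c), $1\le c<2$: here the exponent is genuinely quadratic in $k$ with a negative leading coefficient relative to the geometric growth, so the dominant contribution comes from $k$ near $m=\lfloor cn/2\rfloor$. I would substitute $k=m-j$, let $n\to\infty$ with $j$ fixed, and track the fractional part $r=\{cn/2\}$: the $q$-powers should reorganize into a theta-like bilateral-looking sum, and I expect the resulting series in $j$ to sum (via the $q^2$-binomial theorem or Jacobi triple product) to the product $(q^2,q^{3+2r}/(st_1t_2t_3),q^{-(1+2r)}st_1t_2t_3;q^2)_\infty$ divided by $(q;q)_\infty$, times the same prefactor limits; the factor $q^{r^2}$ emerges from completing the square in the exponent around $k=m$.

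The main obstacle will be part (c): bookkeeping the $q$-exponents so that the shift $k=m-j$ produces a convergent series whose sum is recognizable as a triple product, and correctly handling the fractional part $r$ (which does not tend to a limit but cycles) so that the normalization $q^{n+c^2n^2/4-2m}$ exactly cancels the $n$-dependent part of the exponent, leaving the clean $r$-dependent answer. In each part I must also verify the Tannery/dominated-convergence hypothesis, i.e. produce an $n$-independent summable majorant for the rescaled terms; for (a) and (b) this is routine since the dominant term sits at a fixed $k$, but for (c) the dominant index moves with $n$, so I would instead re-index first (set $k=m-j$) and then bound in $j$ uniformly in $n$, which is where the estimates require care. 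The limits of the $n$-dependent Pochhammer prefactors $(-q^2/t_1t_3;q)_n$, $(-q^2/t_2t_3;q)_n$, and $(q;q)_n/(q;q)_{n-k}$ are standard and contribute the infinite-product constants appearing in the statements.
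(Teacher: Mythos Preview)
Your plan for part (c) is correct and matches the paper's argument: split $\sum_{k=0}^n=\sum_{k=0}^m+\sum_{k=m+1}^n$, recenter each piece around $k=m$ (the paper uses $k\mapsto m-k$ in the first and $k\mapsto m+1+k$ in the second), apply Tannery's theorem, and recognize the resulting bilateral sum as a Jacobi triple product. The $q^{r^2}$ and the normalization come out exactly as you describe.

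For parts (a) and (b), however, you have misidentified the dominant range of $k$. After the substitutions you describe, the $q$-exponent of the $k$-th term of the ${}_3\phi_2$ (with $z=q^{-cn}/s$) is, up to bounded factors, $k^2-cnk$; more precisely, one obtains the factor $q^{(n-k)^2}$ times bounded Pochhammers when $c=2$, and $q^{k^2-2k+(c-2)nk}$ after reversing in the case $c>2$. The minimum of $k^2-cnk$ on $[0,n]$ for $c\ge 2$ is at $k=n$, not $k=0$. Thus for fixed $k$ the normalized term tends to $0$; the mass sits near $k=n$, and the correct move is to \emph{reverse the summation}, replacing $k$ by $n-k$, before applying Tannery's theorem. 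After that reversal the new $k$-th term does converge, for each fixed $k$, to $\dfrac{(-1)^k q^{k^2-2k}(st_1t_2t_3)^k}{(q;q)_k}$ in case (a), and to $\delta_{k,0}$ (because of the extra factor $q^{(c-2)nk}$) in case (b). Your heuristic ``only the $k=0$ term survives'' in (b) is right only \emph{after} this reversal; as written, with $k$ the original index, it would give the wrong limit (indeed the normalized $k=0$ term tends to $0$). The denominators $(q,-q^2/t_1t_3,-q^2/t_1t_2;q)_{n-k}$ that appear after reversal also supply the infinite-product constants $(-q^2/t_1t_2,-q^2/t_2t_3;q)_\infty^{-1}$ via the prefactor $(-q^2/t_1t_3;q)_n/(-q^2/t_2t_3;q)_n$; you should track this cancellation explicitly. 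Once you insert the index reversal, the rest of your outline (termwise limits plus a Tannery majorant) goes through exactly as in the paper.
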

 
\begin{proof}[Proof of $\textup{(a)}$]
	First we find that
	\begin{equation}\label{phi32}
	\begin{aligned}
		&
	\frac{q^{n}V_{n}(x;\,t_{1},\,t_{2},\,t_{3}|q)}{t_{1}^{n}}\frac{\left(-q^{2}/t_{2}t_{3};q\right)_{n}}{\left(-q^{2}/t_{1}t_{3};q\right)_{n}}\\
	&=\sum_{k=0}^{n}\frac{\left(-q/t_{1}z,\,qz/t_{1};q\right)_{k}}{\left(q,-q^{2}/t_{1}t_{3},-q^{2}/t_{1}t_{2};q\right)_{k}}\left(\frac{q^{2}}{t_{2}t_{3}}\right)^{k}\left(q^{-n};q\right)_{k}\left(-q^{n}\right)^{k}\\
	&
	=\sum_{k=0}^{n}\frac{\left(-q/t_{1}z,\,t_{1}/zq^k;q\right)_{k}}{\left(q,-q^{2}/t_{1}t_{3},-q^{2}/t_{1}t_{2};q\right)_{k}}\left(\frac{q^{2}z}{t_1t_{2}t_{3}}\right)^{k}\frac{(q;q)_{n}}{(q;q)_{n-k}}\left(q\right)^{k^2}(-1)^k.
	\end{aligned}
	\end{equation}
	We replace $z$ by $z_n=q^{-2n}/s$ and conclude that  the 
	right-hand side of the above formula is equal to
	\begin{eqnarray}
	\notag
	q^{-n^2}(q;q)_n\sum_{k=0}^{n}\frac{\left(-q^{1+2n}s/t_{1},\,st_{1}q^{2n-k};q\right)_{k}}{\left(q,-q^{2}/t_{1}t_{3},-q^{2}/t_{1}t_{2};q\right)_{k}}\left(-\frac{q^{2}}{st_1t_{2}t_{3}}\right)^{k}\frac{q^{(n-k)^2}}{(q;q)_{n-k}}.
	\end{eqnarray}
	Replace $k$ by $n-k$ in the above sum to change it to      
	\begin{eqnarray}
	\notag
	q^{-n^2}(q;q)_n\sum_{k=0}^{n}\frac{\left(-q^{1+2n}s/t_{1},\,t_{1}q^{n+k}s;q\right)_{n-k}}{\left(q,-q^{2}/t_{1}t_{3},-q^{2}/t_{1}t_{2};q\right)_{n-k}}\left(-\frac{q^{2}}{st_1t_{2}t_{3}}\right)^{n-k}\frac{q^{k^2}}{(q;q)_{k}}.
	\end{eqnarray}
	As $n\to \infty$,  we use Tannery’s theorem  to establish 
	part (a) of the theorem.
\end{proof}
\begin{proof}[Proof of $\textup{(b)}$]
First we apply \eqref{phi32} and replace $z$ by $q^{-cn}/s$.  The 
result is 
\begin{align*}
&
	\frac{q^{n}V_{n}(x_n(s);\,t_{1},\,t_{2},\,t_{3}|q)}{t_{1}^{n}}\frac{\left(-q^{2}/t_{2}t_{3};q\right)_{n}}{\left(-q^{2}/t_{1}t_{3};q\right)_{n}}\\
&
=\sum_{k=0}^{n}\frac{\left(-q^{1+cn}s/t_{1},\,t_{1}s/q^{k-cn};q\right)_{k}}{\left(q,-q^{2}/t_{1}t_{3},-q^{2}/t_{1}t_{2};q\right)_{k}}\left(\frac{q^{2-cn}}{st_1t_{2}t_{3}}\right)^{k}\frac{(q;q)_{n}}{(q;q)_{n-k}}\left(q\right)^{k^2}(-1)^k
\end{align*}
When we interchange $k$ and  $n-k$, the above expression   becomes 
\begin{align*}
&
\left(\frac{q^{(1-c)n+2}}{-st_1t_2t_3}\right)^n \sum_{k=0}^{n}\frac{\left(-q^{1+cn}s/t_{1},\,t_{1}s/q^{(1-c)n-k};q\right)_{n-k}}{\left(q,-q^{2}/t_{1}t_{3},-q^{2}/t_{1}t_{2};q\right)_{n-k}}  \\
&
\times \left(-st_1t_{2}t_{3}\right)^{k}\frac{(q;q)_{n}}{(q;q)_{k}} \left(q\right)^{k^2-2k+(c-2)nk}.
\end{align*} 
Let $n\to \infty$, and use Tannery’s theorem, establish   \eqref{clarge2}.
\end{proof}
 \begin{proof}[Proof of $\textup{(c)}$]First, we write the sum in \eqref{eq:vn-definition} as $\sum_{k=0}^m + \sum_{k= m+1}^n$. Next, replace $k$ by $m-k$ in the first sum, and replace $k$ by $k+m+1$ in the second sum. This leads to  
	 \begin{align*}
&
	 \frac{q^{n+\frac{c^2n^2}{4}}V_{n}(x_n(s);\,t_{1},\,t_{2},\,t_{3}|q)}{t_{1}^{n}}\frac{\left(-q^{2}/t_{2}t_{3};q\right)_{n}}{\left(-q^{2}/t_{1}t_{3};q\right)_{n}} \\
&=	 \sum_{k=0}^{m}\frac{\left(-q^{1+cn}s/t_{1},\,t_{1}s/q^{m-k-cn};q\right)_{m-k}}{\left(q,-q^{2}/t_{1}t_{3},-q^{2}/t_{1}t_{2};q\right)_{m-k}} \left(\frac{-q^{2}}{st_1t_{2}t_{3}}\right)^{m-k}  \frac{(q;q)_{n}}{(q;q)_{n+k-m}}  q^{(k+r)^2} 
	 \\
&+ 
	\sum_{k=0}^{n-m-1}\frac{\left(-q^{1+cn}s/t_{1},\,t_{1}s/q^{k+m+1-cn};q\right)_{k+m+1}}{\left(q,-q^{2}/t_{1}t_{3},-q^{2}/t_{1}t_{2};q\right)_{k+m+1}}\left(\frac{-q^{2}}{st_1t_{2}t_{3}}\right)^{k+m+1}\frac{(q;q)_{n}}{(q;q)_{n-m-k-1}} q^{(k+1-r)^2}.
	 \end{align*}
	  For fixed $r$, then we apply Tannery's theorem  as $n \to \infty$ and use the Jacobi triple product identity. The result is  
	\begin{align*}
&
\lim_{n\to\infty} (-st_2t_3)^mq^{n+c^2n^2/4-2m}t_1^{m-n} V_n(x_n(s);t_1,t_2,t_3)\\
&=\frac{1}{(q,-q^2/t_1t_2,-q^2/t_2t_3;q)_\infty}\sum_{k=0}^\infty(-1)^k\left[q^{(k+r)^2}\left(\frac{q^2}{st_1t_2t_3}\right)^{-k}-q^{(k+1-r)^2}\left(\frac{q^2}{st_1t_2t_3}\right)^{k+1}\right]\\
&
=\frac{1}{(q,-q^2/t_1t_2,-q^2/t_2t_3;q)_\infty}\sum_{k=-\infty}^\infty(-1)^kq^{(k+r)^2}\left(\frac{q^2}{st_1t_2t_3}\right)^{k}\\
&
=\frac{q^{r^2}(q^2,\frac{q^{3+2r}}{st_1t_2t_3},q^{-(1+2r)}st_1t_2t_3;q^2)_\infty}{(q,-q^2/t_1t_2,-q^2/t_2t_3;q)_\infty}.
\end{align*}
This complete the proof. 
\end{proof}

In the rest of the section we consider cases where at least one parameter depends on the degree $n$ and $n \to \infty$. 
For any $\alpha>0$, let 
\begin{equation}
t_{1}\to t_{1}q^{-n\alpha},\quad x_{1}(n)=\frac{q^{-n\alpha}z-q^{n\alpha}/z}{2}.\label{eq:asymptotics-3}
\end{equation}
Since,
\begin{align*}
& \frac{q^{n^{2}\alpha+n}\left(-q^{2}/t_{2}t_{3};q\right)_{n}}{t_{1}^{n}\left(-q^{2+n\alpha}/t_{1}t_{3};q\right)_{n}}V_{n}\left(x_{1}(n);\,t_{1}q^{-n\alpha},\,t_{2},\,t_{3}\vert q\right)\\
& =_{3}\phi_{2}\begin{pmatrix}\begin{array}{c}
q^{-n},-q^{2n\alpha}/t_{1}z,\,qz/t_{1}\\
-q^{2+n\alpha}/t_{1}t_{3},-q^{2+n\alpha}/t_{1}t_{2}
\end{array} & \bigg|q,\,-\frac{q^{n+2}}{t_{2}t_{3}}\end{pmatrix},
\end{align*}
then,
\begin{equation}
\lim_{n\to\infty}\frac{q^{n^{2}\alpha+n}}{t_{1}^{n}}V_{n}\left(x_{1}(n);\,t_{1}q^{-n\alpha},\,t_{2},\,t_{3}\vert q\right)=\frac{\left(-q^{3}z/t_{1}t_{2}t_{3};q\right)_{\infty}}{\left(-q^{2}/t_{2}t_{3};q\right)_{\infty}^{2}}.\label{eq:asymptotics-4}
\end{equation}

\begin{thm}
	\label{thm:qairy}Let 
	\begin{equation}
	\alpha>\beta>0,\ \gamma,\delta>0,\ \gamma+\delta=\alpha-\beta=1\label{eq:qairy-1}
	\end{equation}
	and 
	\begin{equation}
	z,\,t_{1},\,t_{2},\,t_{3}\in\mathbb{C},\quad z\cdot t_{1}\cdot t_{2}\cdot t_{3}\neq0.\label{eq:qairy-2}
	\end{equation}
	Then there exists a positive number 
	\begin{equation}
	0<\eta<\min\left\{ 1,\,(\alpha+\beta),\,(\beta+\delta),\,(\beta+\gamma)\right\} \label{eq:qairy-3}
	\end{equation}
	such that
	\begin{equation}
	V_{n}\left(x_{2}(n);\,t_{1}q^{-n\beta},\,t_{2}q^{-n\gamma},\,t_{3}q^{-n\delta}\big|q\right)=\frac{t_{1}^{n}}{q^{n^{2}\beta+n}}\left(A_{q}\left(\frac{q^{2}z}{t_{1}t_{2}t_{3}}\right)+\mathcal{O}\left(q^{\eta n}\right)\right)\label{eq:qairy-4}
	\end{equation}
	as $n\to\infty$.
\end{thm}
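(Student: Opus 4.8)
The plan is to read $V_n$ off the series form \eqref{phi32}, insert the scaled parameters, and then estimate the resulting finite sum term by term with explicit error control. First I would substitute $t_1\to t_1q^{-n\beta}$, $t_2\to t_2q^{-n\gamma}$, $t_3\to t_3q^{-n\delta}$ and $x\to x_2(n)$ (so that the variable $z$ in \eqref{phi32} is replaced by $zq^{-n\alpha}$) into \eqref{phi32}. Using $\alpha-\beta=1$ and $\gamma+\delta=1$, the ratio $q^2z/(t_1t_2t_3)$ is invariant — it stays equal to $w:=q^2z/(t_1t_2t_3)$ — the inner argument $t_1/(zq^k)$ becomes $t_1q^{n-k}/z$, and $-q/t_1z$, $-q^2/t_1t_3$, $-q^2/t_1t_2$ become $-q^{1+n(\alpha+\beta)}/t_1z$, $-q^{2+n(\beta+\delta)}/t_1t_3$, $-q^{2+n(\beta+\gamma)}/t_1t_2$. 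This turns \eqref{phi32} into the exact identity
\begin{equation*}
V_n\bigl(x_2(n);\,t_1q^{-n\beta},\,t_2q^{-n\gamma},\,t_3q^{-n\delta}\,|\,q\bigr)=\frac{t_1^n}{q^{n^2\beta+n}}\,R_n\sum_{k=0}^n T_{n,k},
\end{equation*}
where $R_n=(-q^{2+n(\beta+\delta)}/t_1t_3;q)_n/(-q^{2+n}/t_2t_3;q)_n$ and
\begin{equation*}
T_{n,k}=\frac{(-q^{1+n(\alpha+\beta)}/t_1z,\,t_1q^{n-k}/z;q)_k}{(q,\,-q^{2+n(\beta+\delta)}/t_1t_3,\,-q^{2+n(\beta+\gamma)}/t_1t_2;q)_k}\,\frac{(q;q)_n}{(q;q)_{n-k}}\,w^kq^{k^2}(-1)^k .
\end{equation*}
A one-line estimate on the two $q$-shifted factorials in $R_n$ (one factor is $1+\O(q^{n(\beta+\delta)})$, the other $1+\O(q^n)$) gives $R_n=1+\O(q^{\eta n})$ for any $\eta<\min\{1,\beta+\delta\}$, so it suffices to show $\sum_{k=0}^n T_{n,k}=A_q(w)+\O(q^{\eta n})$.

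For the sum I would use two facts. (i) For each fixed $k$, $T_{n,k}\to a_k:=q^{k^2}(-w)^k/(q;q)_k$, the $k$-th term of $A_q(w)$, because the exponents $1+n(\alpha+\beta)$, $2+n(\beta+\delta)$, $2+n(\beta+\gamma)$, $n-k$ all tend to $+\infty$, forcing the corresponding $q$-shifted factorials (and $(q;q)_n/(q;q)_{n-k}$) to $1$. (ii) There is a constant $C_0=C_0(q,z,t_1)$ with $|T_{n,k}|\le C_0^k q^{k^2}|w|^k/(q;q)_\infty$ for all $0\le k\le n$ and all large $n$, using $|(t_1q^{n-k}/z;q)_k|\le(1+|t_1/z|)^k$ (valid since $n-k+j\ge0$), $|(-q^{1+n(\alpha+\beta)}/t_1z;q)_k|\le(1+|t_1z|^{-1})^k$, $|(q;q)_n/(q;q)_{n-k}|\le1$, and $|(-q^{2+n(\beta+\delta)}/t_1t_3;q)_k|,\,|(-q^{2+n(\beta+\gamma)}/t_1t_2;q)_k|\ge 2^{-k}$ once $n$ is large (the arguments then have modulus $<1/2$). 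Fact (i) together with the summable majorant in (ii) already yields $\sum_{k=0}^n T_{n,k}\to A_q(w)$ by Tannery's theorem; the error term requires a quantitative refinement.

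To get the rate I would fix a cutoff $K=K(n)$ with $K(n)\to\infty$, $K(n)=o(n)$ and $K(n)^2/n\to\infty$ (e.g.\ $K(n)=\lceil n^{2/3}\rceil$), and split $\sum_{k=0}^n T_{n,k}-A_q(w)=\sum_{k\le K}(T_{n,k}-a_k)+\sum_{K<k\le n}T_{n,k}-\sum_{k>K}a_k$. In the head, expanding each $q$-shifted factorial in the ratio $T_{n,k}/a_k$ as $1+(\text{small})$ and using $n-k\ge n-K$ with $K=o(n)$ gives $T_{n,k}/a_k=1+\O\bigl(K(q^{n(\alpha+\beta)}+q^{n(\beta+\gamma)}+q^{n(\beta+\delta)}+q^{n-K})\bigr)$ uniformly for $k\le K$, hence $\sum_{k\le K}|T_{n,k}-a_k|=\O(Kq^{\rho n})\sum_k|a_k|$ with $\rho$ as close as we wish to $\min\{1,\alpha+\beta,\beta+\gamma,\beta+\delta\}$; this is $\O(q^{\eta n})$ for $\eta$ in the range \eqref{eq:qairy-3}. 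Both remaining tails are dominated (using $C_0>1$) by $\sum_{k>K}(C_0|w|)^kq^{k^2}=\O(q^{K^2/2})$, which is $o(q^{\eta n})$ for every $\eta>0$ because $K^2/n\to\infty$. Combining this with $R_n=1+\O(q^{\eta n})$ and the fact that $A_q(w)$ is a fixed constant yields \eqref{eq:qairy-4}, and since every constraint on $\eta$ encountered is one of $\eta<1$, $\eta<\alpha+\beta$, $\eta<\beta+\gamma$, $\eta<\beta+\delta$, an admissible $\eta$ as in \eqref{eq:qairy-3} exists.

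The step I expect to be the real obstacle is making the tail bound uniform in $n$: the Gaussian factor $q^{k^2}$ defeats the geometric growth $C_0^k$, but one must take the cutoff $K(n)$ larger than $\sqrt n$ so that $q^{K^2}$ dominates $q^{\eta n}$, while simultaneously keeping $K(n)=o(n)$ so that the $q^{n-K}$-type deviations in the head (coming from $(t_1q^{n-k}/z;q)_k$ and $(q;q)_n/(q;q)_{n-k}$) do not push the admissible exponent $\eta$ below $\min\{1,\alpha+\beta,\beta+\gamma,\beta+\delta\}$; everything else is routine bookkeeping with $q$-shifted factorials.
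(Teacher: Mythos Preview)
Your proof is correct and follows essentially the same route as the paper: substitute the scaled parameters into the series \eqref{phi32}, split the resulting finite sum at a cutoff, and estimate head and tail separately. The paper uses the cutoff $K=\lfloor\sqrt{n}\rfloor$ rather than your $\lceil n^{2/3}\rceil$ and is less explicit about the uniform majorant on the tail, but the structure of the argument is identical.
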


\begin{proof}
	Let 
	\[
	t_{1}\to t_{1}q^{-n\beta},\ t_{2}\to t_{2}q^{-n\gamma},\ t_{3}\to t_{3}q^{-n\delta},\ x_{2}(n)=\frac{q^{-n\alpha}z-q^{n\alpha}/z}{2}.
	\]
	Then, 
	\begin{align*}
	& V_{n}\left(x_{2}(n);\,t_{1}q^{-n\beta},\,t_{2}q^{-n\gamma},\,t_{3}q^{-n\delta}\big|q\right)\frac{q^{n^{2}\beta+n}\left(-q^{2+n}/t_{2}t_{3};q\right)_{n}}{t_{1}^{n}\left(-q^{2+n(\beta+\delta)}/t_{1}t_{3};q\right)_{n}}\\
	& =_{3}\phi_{2}\begin{pmatrix}\begin{array}{c}
	q^{-n},-q^{1+n(\alpha+\beta)}/t_{1}z,\,q^{1-n(\gamma+\delta)}z/t_{1}\\
	-q^{2+n(\beta+\delta)}/t_{1}t_{3},-q^{2+n(\beta+\gamma)}/t_{1}t_{2}
	\end{array} & \bigg|q,\,-\frac{q^{2+2n}}{t_{2}t_{3}}\end{pmatrix}\\
	& =\sum_{k=0}^{n}\frac{\left(-q^{2}/t_{2}t_{3}\right)^{k}}{\left(q;q\right)_{k}}\frac{\left(q^{-n},\,q^{1-n}z/t_{1};q\right)_{k}q^{2kn}\left(-q^{1+n(\alpha+\beta)}/t_{1}z;q\right)_{k}}{\left(-q^{2+n(\beta+\delta)}/t_{1}t_{3},-q^{2+n(\beta+\gamma)}/t_{1}t_{2};q\right)_{k}}\\
	& =\sum_{k=0}^{n}\frac{q^{k^{2}}}{\left(q;q\right)_{k}}\frac{\left(q;q\right)_{n}}{\left(q;q\right)_{n-k}}\frac{\left(t_{1}/z;q\right)_{n}}{\left(t_{1}/z;q\right)_{n-k}}\frac{\left(-q^{2}z/t_{1}t_{2}t_{3}\right)^{k}\left(-q^{1+n(\alpha+\beta)}/t_{1}z;q\right)_{k}}{\left(-q^{2+n(\beta+\delta)}/t_{1}t_{3},-q^{2+n(\beta+\gamma)}/t_{1}t_{2};q\right)_{k}}\\
	& =S_{1}(n)+S_{2}(n),
	\end{align*}
	where
	\[
	S_{1}(n)=\sum_{k=0}^{\left\lfloor \sqrt{n}\right\rfloor }\frac{q^{k^{2}}}{\left(q;q\right)_{k}}\frac{\left(q;q\right)_{n}}{\left(q;q\right)_{n-k}}\frac{\left(t_{1}/z;q\right)_{n}}{\left(t_{1}/z;q\right)_{n-k}}\frac{\left(-q^{2}z/t_{1}t_{2}t_{3}\right)^{k}\left(-q^{1+n(\alpha+\beta)}/t_{1}z;q\right)_{k}}{\left(-q^{2+n(\beta+\delta)}/t_{1}t_{3},-q^{2+n(\beta+\gamma)}/t_{1}t_{2};q\right)_{k}}
	\]
	and
	\[
	S_{2}(n)=\sum_{k=\left\lfloor \sqrt{n}\right\rfloor +1}^{n}\frac{q^{k^{2}}}{\left(q;q\right)_{k}}\frac{\left(q;q\right)_{n}}{\left(q;q\right)_{n-k}}\frac{\left(t_{1}/z;q\right)_{n}}{\left(t_{1}/z;q\right)_{n-k}}\frac{\left(-q^{2}z/t_{1}t_{2}t_{3}\right)^{k}\left(-q^{1+n(\alpha+\beta)}/t_{1}z;q\right)_{k}}{\left(-q^{2+n(\beta+\delta)}/t_{1}t_{3},-q^{2+n(\beta+\gamma)}/t_{1}t_{2};q\right)_{k}}.
	\]
	Since
	\[
	\frac{\left(q;q\right)_{n}}{\left(q;q\right)_{n-k}}\frac{\left(t_{1}/z;q\right)_{n}}{\left(t_{1}/z;q\right)_{n-k}}=1+\mathcal{O}\left(q^{n-\sqrt{n}}\right),\quad n\to\infty
	\]
	and
	\begin{align*}
	& \frac{\left(-q^{1+n(\alpha+\beta)}/t_{1}z;q\right)_{k}}{\left(-q^{2+n(\beta+\delta)}/t_{1}t_{3},-q^{2+n(\beta+\gamma)}/t_{1}t_{2}\right)_{k}}=\frac{\left(-q^{1+n(\alpha+\beta)}/t_{1}z;q\right)_{\infty}}{\left(-q^{1+n(\alpha+\beta)+k}/t_{1}z;q\right)_{\infty}}\\
	& \times\frac{\left(-q^{2+n(\beta+\delta)+k}/t_{1}t_{3};q\right)_{\infty}}{\left(-q^{2+n(\beta+\delta)}/t_{1}t_{3};q\right)_{\infty}}\frac{\left(-q^{2+n(\beta+\gamma)+k}/t_{1}t_{2};q\right)_{\infty}}{\left(-q^{2+n(\beta+\gamma)}/t_{1}t_{2};q\right)_{\infty}}\\
	& =1+\mathcal{O}\left(q^{n-\sqrt{n}}\right),\quad n\to\infty,
	\end{align*}
	then,
	\begin{align}
	& S_{1}(n)=\sum_{k=0}^{\left\lfloor \sqrt{n}\right\rfloor }\frac{q^{k^{2}}}{\left(q;q\right)_{k}}\left(\frac{-q^{2}z}{t_{1}t_{2}t_{3}}\right)^{k}+\mathcal{O}\left(q^{\eta n}\right)\label{eq:qairy-5}\\
	& =A_{q}\left(\frac{q^{2}z}{t_{1}t_{2}t_{3}}\right)-\sum_{k=\left\lfloor \sqrt{n}\right\rfloor +1}^{n}\frac{q^{k^{2}}}{\left(q;q\right)_{k}}\left(\frac{-q^{2}z}{t_{1}t_{2}t_{3}}\right)^{k}+\mathcal{O}\left(q^{\eta n}\right)\nonumber \\
	& =A_{q}\left(\frac{q^{2}z}{t_{1}t_{2}t_{3}}\right)+\mathcal{O}\left(q^{\eta n}\right),\quad n\to\infty,\nonumber 
	\end{align}
	where
	\begin{equation}
	0<\eta<\min\left\{ 1,\,(\alpha+\beta),\,(\beta+\delta),\,(\beta+\gamma)\right\} ..\label{eq:qairy-6}
	\end{equation}
	It is clear that 
	\begin{equation}
	S_{1}(n)=\mathcal{O}\left(q^{\eta n}\right),\ \frac{\left(-q^{2+n(\beta+\delta)}/t_{1}t_{3};q\right)_{n}}{\left(-q^{2+n}/t_{2}t_{3};q\right)_{n}}=\mathcal{O}\left(q^{\eta n}\right),\quad n\to\infty,\label{eq:qairy-7}
	\end{equation}
	and Theorem \ref{thm:qairy} is obtained by combining \eqref{eq:qairy-5},\eqref{eq:qairy-6}
	and \eqref{eq:qairy-7}. 
\end{proof}

\begin{thm}
	\label{thm:theta}For any 
	\begin{equation}
	0<\alpha<1,\quad w\notin\left\{ q^{-(2n-1)/2}\vert n\in\mathbb{Z}\right\} \cup\left\{ 0\right\} \label{eq:theta-1}
	\end{equation}
	we have
	\begin{equation}
	\left(\frac{\sqrt{q}}{i}\right)^{n}V_{n}\left(\frac{w+w^{-1}}{2};q^{\frac{1}{2}},\,q^{-n\alpha},\,q^{-n\alpha}\bigg|q\right)=\theta_{4}\left(w;q^{\frac{1}{2}}\right)\left\{ 1+\mathcal{O}\left(q^{n\alpha}\right)\right\} ,\label{eq:theta-2}
	\end{equation}
	as $n\to\infty$, where 
	\begin{equation}
	\theta_{4}(w;q)=\left(q^{2},\,q/w,\,qw;q^{2}\right)_{\infty}=\sum_{n=-\infty}^{\infty}(-1)^{n}q^{n^{2}}w^{n}.\label{eq:theta-3}
	\end{equation}
\end{thm}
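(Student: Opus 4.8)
The plan is to re-run, with the specialisation $t_{1}=q^{1/2}$, $t_{2}=t_{3}=q^{-n\alpha}$, the argument used in the proof of part~(c) above and in the proof of Theorem~\ref{thm:qairy}: expand $V_{n}$ through its ${}_{3}\phi_{2}$ series \eqref{eq:vn-definition}, turn the terminating sum into a sum whose general term is manifestly Gaussian in $k$, split it at its interior extremum, reindex the two halves, pass to the limit term by term with Tannery's theorem, and recombine by the Jacobi triple product identity for $\theta_{4}$ recorded in \eqref{eq:theta-3}.

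Concretely, I would write the ${}_{3}\phi_{2}$ in \eqref{eq:vn-definition} as $\sum_{k=0}^{n}$ and then, exactly as in the two proofs just cited, apply the $q$-shifted factorial identities displayed immediately before the Plancherel--Rotach theorem --- $(q^{-n};q)_{k}=\frac{(q;q)_{n}}{(q;q)_{n-k}}(-1)^{k}q^{\binom{k}{2}-nk}$ and $(A;q)_{k}=(q^{1-k}/A;q)_{k}(-A)^{k}q^{\binom{k}{2}}$ --- to every $q$-shifted factorial that carries the degree $n$ or the moving parameter $q^{-n\alpha}$, and insert the relation between $w$ and the $z$ of \eqref{eqxofz}. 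After collecting the $q$- and $w$-powers the $k$-th term should take the form (a factor $1+\mathcal{O}(q^{n\alpha})$) times a monomial of shape $q^{c(k-m)^{2}}w^{\pm k}$ up to an overall $n$-dependent constant, where $m=m(n)$ is the nearest integer to the vertex of this parabola. Writing $\sum_{k=0}^{n}=\sum_{k=0}^{m}+\sum_{k=m+1}^{n}$ and replacing $k$ by $m-k$ in the first piece and by $k+m+1$ in the second turns each summand into one that decays Gaussianly in the new index, so Tannery's theorem produces two one-sided series which combine into the bilateral theta series $\sum_{k\in\mathbb{Z}}(-1)^{k}q^{k^{2}/2}w^{k}$. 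By the Jacobi triple product this equals $(q,q^{1/2}/w,q^{1/2}w;q)_{\infty}=\theta_{4}(w;q^{1/2})$, which is \eqref{eq:theta-2}; the excluded set $\{q^{-(2n-1)/2}\mid n\in\mathbb{Z}\}$ is precisely the zero set of this product, which is what allows the conclusion to be stated in the relative form $\{1+\mathcal{O}(q^{n\alpha})\}$, and the normalising factor $(\sqrt{q}/i)^{n}$ absorbs the $(t_{1}/q)^{n}=q^{-n/2}$ standing in front of the ${}_{3}\phi_{2}$ together with the power of $i$ thrown off by the leading behaviour of the reindexed $q$-factorials.

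As with Theorem~\ref{thm:qairy}, the identification of the limit is the easy part; the real work is the uniform error control. One has to (i) locate $m(n)$ and verify that, in each reindexed half, everything beyond $\mathcal{O}(1)$ many terms from the junction decays faster than $q^{n\alpha}$, which is where the hypothesis $0<\alpha<1$ is used; (ii) show that each ``$n\alpha$-shifted'' $q$-factorial correction is $1+\mathcal{O}(q^{n\alpha})$ with an implied constant \emph{uniform} in $k$ over the relevant range, so that the errors do not accumulate across the sum; and (iii) exhibit a single summable majorant valid for all large $n$, so that Tannery's theorem legitimately applies to both reindexed series. Steps (ii) and (iii) carry essentially all of the effort, because the parameters themselves move with $n$; once they are in place the triple product collapses the two one-sided series into $\theta_{4}$ and the estimate \eqref{eq:theta-2} follows.
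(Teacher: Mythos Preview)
Your plan transplants the Part~(c) argument, but that argument worked only because $z=q^{-cn}/s$ scaled with $n$: flipping $(qz/t_{1};q)_{k}$ then throws off a factor $q^{-cnk}$, which is precisely what pulls the vertex of the resulting quadratic in $k$ into the interior of $[0,n]$ and, after reindexing, produces a genuine bilateral series. In the present theorem the argument $x=(w+w^{-1})/2$ is held fixed. Once $(q^{-n};q)_{k}$ is rewritten, the $q$-exponent of the $k$-th term is $\tfrac12 k^{2}+(\tfrac32+2n\alpha)k$, whose vertex sits at $k=-(3/2+2n\alpha)<0$; flipping the $n\alpha$-carrying denominator factors $(-q^{3/2+n\alpha};q)_{k}$ as you propose only moves the vertex further left or destroys the downward parabola altogether. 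There is thus no interior $m(n)$ at which to split, and, more fundamentally, the $w$-dependence of the $k$-th term lives entirely inside $(-q/t_{1}z,\,qz/t_{1};q)_{k}$, which for fixed $z$ tends to a $k$-independent constant rather than to a monomial $w^{\pm k}$. The shape ``$q^{c(k-m)^{2}}w^{\pm k}$'' that your scheme presupposes never materialises, so the two reindexed halves cannot recombine into $\sum_{k\in\mathbb Z}(-1)^{k}q^{k^{2}/2}w^{k}$.

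The paper exploits exactly this structure. It writes $(-q/t_{1}z,\,qz/t_{1};q)_{k}$ as a ratio of infinite products and pulls $(-q/t_{1}z,\,qz/t_{1};q)_{\infty}$ out of the sum; with $t_{2}=t_{3}=q^{-n\alpha}$ the denominator factors $(-q^{2}/t_{1}t_{2},-q^{2}/t_{1}t_{3};q)_{k}$ are $1+\mathcal O(q^{n\alpha})$ uniformly, and a split at $\lfloor\sqrt n\rfloor$ (not at a moving vertex) shows the residual $k$-sum equals $1+\mathcal O(q^{n\alpha})$, since it is dominated by its $k=0$ term. The theta function enters as the already-factored product $(q,\,q^{1/2}/w,\,q^{1/2}w;q)_{\infty}$ after the final substitution $t_{1}=i\sqrt q$, $z=iw$; the Jacobi triple product is used only as a definition, not as a summation device.
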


\begin{proof}
	First we observe that
	\begin{align*}
	& V_{n}(x;\,\mathbf{t}|q)=\frac{t_{1}^{n}\left(-q^{2}/t_{1}t_{3};q\right)_{n}}{q^{n}\left(-q^{2}/t_{2}t_{3};q\right)_{n}}{}_{3}\phi_{2}\begin{pmatrix}\begin{array}{c}
	q^{-n},-q/t_{1}z,\,qz/t_{1}\\
	-q^{2}/t_{1}t_{3},-q^{2}/t_{1}t_{2}
	\end{array} & \bigg|q,\,-\frac{q^{n+2}}{t_{2}t_{3}}\end{pmatrix}\\
	& =\frac{t_{1}^{n}\left(-q^{2}/t_{1}t_{3};q\right)_{n}}{q^{n}\left(-q^{2}/t_{2}t_{3};q\right)_{n}}\sum_{k=0}^{n}\frac{\left(q^{-n},-q/t_{1}z,\,qz/t_{1};q\right)_{k}}{\left(q,-q^{2}/t_{1}t_{3},-q^{2}/t_{1}t_{2};q\right)_{k}}\left(-\frac{q^{n+2}}{t_{2}t_{3}}\right)^{k}\\
	& =\frac{t_{1}^{n}\left(q,-q^{2}/t_{1}t_{3};q\right)_{n}}{q^{n}\left(-q^{2}/t_{2}t_{3};q\right)_{n}}\sum_{k=0}^{n}\frac{q^{k^{2}/2+k/2}}{\left(q;q\right)_{k}\left(q;q\right)_{n-k}}\left(\frac{q}{t_{2}t_{3}}\right)^{k}\frac{\left(-q/t_{1}z,\,qz/t_{1};q\right)_{k}}{\left(-q^{2}/t_{1}t_{3},-q^{2}/t_{1}t_{2};q\right)_{k}}\\
	& =\frac{t_{1}^{n}\left(q,-q^{2}/t_{1}t_{3};q\right)_{n}}{q^{n}\left(-q^{2}/t_{2}t_{3};q\right)_{n}}\sum_{k=0}^{n}\frac{q^{k^{2}/2+k/2}}{\left(q;q\right)_{k}\left(q;q\right)_{n-k}}\left(\frac{q}{t_{2}t_{3}}\right)^{k}\\
	& \times\frac{\left(-q/t_{1}z,\,qz/t_{1};q\right)_{\infty}}{\left(-q^{k+1}/t_{1}z,\,q^{k+1}z/t_{1};q\right)_{\infty}}\frac{\left(-q^{2+k}/t_{1}t_{3},-q^{2+k}/t_{1}t_{2};q\right)_{\infty}}{\left(-q^{2}/t_{1}t_{3},-q^{2}/t_{1}t_{2};q\right)_{\infty}}\\
	& =\frac{t_{1}^{n}\left(q,-q^{2}/t_{1}t_{3};q\right)_{n}}{q^{n}\left(-q^{2}/t_{2}t_{3};q\right)_{n}}\frac{\left(-q/t_{1}z,\,qz/t_{1};q\right)_{\infty}}{\left(-q^{2}/t_{1}t_{3},-q^{2}/t_{1}t_{2};q\right)_{\infty}}\\
	& \times\sum_{k=0}^{n}\frac{q^{k^{2}/2+k/2}}{\left(q;q\right)_{k}\left(q;q\right)_{n-k}}\left(\frac{q}{t_{2}t_{3}}\right)^{k}\frac{\left(-q^{2+k}/t_{1}t_{3},-q^{2+k}/t_{1}t_{2};q\right)_{\infty}}{\left(-q^{k+1}/t_{1}z,\,q^{k+1}z/t_{1};q\right)_{\infty}}.
	\end{align*}
	Let 
	\begin{equation}
	t_{2}=t_{3}=q^{-n\alpha},\quad0<\alpha<1\label{eq:theta-4}
	\end{equation}
	and
	\begin{equation}
	z\notin\left\{ -q^{n}/t_{1}\vert n\in\mathbb{N}\right\} \cup\left\{ t_{1}q^{-n}\vert n\in\mathbb{N}\right\} .\label{eq:theta-5}
	\end{equation}
	As $n\to\infty$, since
	\begin{align*}
	& \sum_{0\le k\le\left\lfloor \sqrt{n}\right\rfloor }\frac{q^{k^{2}/2+k/2}}{\left(q;q\right)_{k}\left(q;q\right)_{n-k}}\left(\frac{q}{t_{2}t_{3}}\right)^{k}\frac{\left(-q^{2+k}/t_{1}t_{3},-q^{2+k}/t_{1}t_{2};q\right)_{\infty}}{\left(-q^{k+1}/t_{1}z,\,q^{k+1}z/t_{1};q\right)_{\infty}}\\
	& =\sum_{0\le k\le\left\lfloor \sqrt{n}\right\rfloor }\frac{q^{k^{2}/2+3k/2+2n\alpha}}{\left(q;q\right)_{k}\left(q;q\right)_{n-k}}\frac{\left(-q^{2+k+n\alpha}/t_{1},-q^{2+k+n\alpha}/t_{1};q\right)_{\infty}}{\left(-q^{k+1}/t_{1}z,\,q^{k+1}z/t_{1};q\right)_{\infty}}\\
	& =\left(1+\mathcal{O}\left(q^{n-\sqrt{n}}\right)\right)\sum_{0\le k\le\left\lfloor \sqrt{n}\right\rfloor }\frac{q^{k^{2}/2+3k/2+2n\alpha}}{\left(q;q\right)_{k}\left(-q^{k+1}/t_{1}z,\,q^{k+1}z/t_{1};q\right)_{\infty}}\\
	& =\left(1+\mathcal{O}\left(q^{n-\sqrt{n}}\right)\right)\left(1+\mathcal{O}\left(q^{n\alpha}\right)\right)=1+\mathcal{O}\left(q^{n\alpha}\right),
	\end{align*}
	and
	\[
	\sum_{\sqrt{n}<k\le n}\frac{q^{k^{2}/2+k/2}}{\left(q;q\right)_{k}\left(q;q\right)_{n-k}}\left(\frac{q}{t_{2}t_{3}}\right)^{k}\frac{\left(-q^{2+k}/t_{1}t_{3},-q^{2+k}/t_{1}t_{2};q\right)_{\infty}}{\left(-q^{k+1}/t_{1}z,\,q^{k+1}z/t_{1};q\right)_{\infty}}=\mathcal{O}\left(q^{n\alpha}\right),
	\]
	then,
	\begin{equation}
	\sum_{k=0}^{n}\frac{q^{k^{2}/2+k/2}}{\left(q;q\right)_{k}\left(q;q\right)_{n-k}}\left(\frac{q}{t_{2}t_{3}}\right)^{k}\frac{\left(-q^{2+k}/t_{1}t_{3},-q^{2+k}/t_{1}t_{2};q\right)_{\infty}}{\left(-q^{k+1}/t_{1}z,\,q^{k+1}z/t_{1};q\right)_{\infty}}=1+\mathcal{O}\left(q^{n\alpha}\right).\label{eq:theta-6}
	\end{equation}
	This together with
	\begin{equation}
	\frac{\left(q,-q^{2}/t_{1}t_{3};q\right)_{n}}{\left(-q^{2}/t_{2}t_{3};q\right)_{n}}=\frac{\left(q,-q^{2+n\alpha}/t_{1};q\right)_{n}}{\left(-q^{2+n\alpha}/t_{2};q\right)_{n}}=\left(q;q\right)_{\infty}\left(1+\mathcal{O}\left(q^{n\alpha}\right)\right)\label{eq:theta-7}
	\end{equation}
	and 
	\begin{equation}
	\frac{1}{\left(-q^{2}/t_{1}t_{3},-q^{2}/t_{1}t_{2};q\right)_{\infty}}=\frac{1}{\left(-q^{2+n\alpha}/t_{1},-q^{2+n\alpha}/t_{1};q\right)_{\infty}}=1+\mathcal{O}\left(q^{n\alpha}\right)\label{eq:theta-8}
	\end{equation}
	gives
	\begin{equation}
	\left(\frac{q}{t_{1}}\right)^{n}V_{n}(x;\,\mathbf{t}|q)=\left(q,-q/t_{1}z,\,qz/t_{1};q\right)_{\infty}\left\{ 1+\mathcal{O}\left(q^{n\alpha}\right)\right\} \label{eq:theta-9}
	\end{equation}
	as $n\to\infty$. Theorem \ref{thm:theta} is proved by taking $t_{1}=i\sqrt{q}$
	and $z=iw$.
\end{proof}

\noindent{\bf Acknowledgements} This paper started when Ismail and Zhang  
were visiting East China Normal University in Shanghai hosted by 
Zhi-Guo Liu. We appreciate the hospitality and the great research environment provided by Liu and the first named author also acknowledge the financial support for his trip.

\noindent M. E. H. I, 
University of Central Florida, Orlando, Florida 32816\\
  email: ismail@math.ucf.edu
  \bigskip
   
\noindent R. Z.,   College of Science, 
Northwest A\&F University, 
Yangling, Shaanxi 712100, 
P. R. China PRC\\
   email:  ruimingzhang@yahoo.com

  \bigskip 
 \noindent K. Z.,
 School of Mathematics and Statistics, Central South University, Changsha, Hunan 410083, P.R. China.\\
 email: krzhou1999@knights.ucf.edu

\end{document}